\newtheorem{theorem}{Theorem}
\newtheorem{prop}[theorem]{Proposition}
\newtheorem{proposition}[theorem]{Proposition}
\newtheorem{lemma}[theorem]{Lemma}
\newtheorem{remark}[theorem]{Remark}
\newenvironment{proof}{\noindent {\bf Proof:}}{$\Box$ \vspace{2 ex}}
\def\Z{{\mathbb Z}}
\def\Q{{\mathbb Q}}
\DeclareMathOperator{\End}{End}
\DeclareMathOperator{\SO}{SO}
\DeclareMathOperator{\GL}{GL}
\DeclareMathOperator{\SL}{SL}
\DeclareMathOperator{\Sp}{Sp}
\DeclareMathOperator{\PGL}{PGL}
\DeclareMathOperator{\PSL}{PSL}
\DeclareMathOperator{\PGSp}{PGSp}
\DeclareMathOperator{\Trace}{Trace}
\DeclareMathOperator{\disc}{disc}
\DeclareMathOperator{\Res}{Res}
\DeclareMathOperator{\Spec}{Spec}
\DeclareMathOperator{\Sel}{Sel}
\DeclareMathOperator{\Pic}{Pic}
\DeclareMathOperator{\Sym}{Sym}
\title{Arithmetic invariant theory}
\author{Manjul Bhargava and Benedict H. Gross}
\begin{document}
\maketitle

\tableofcontents

\section{Introduction}

Let $k$ be a field, let $G$ be a reductive algebraic group over $k$, and let $V$ be a linear representation of~$G$. Geometric invariant theory involves the study of the $k$-algebra of $G$-invariant polynomials on~$V$, and the relation between these invariants and the $G$-orbits on $V$, usually under the hypothesis that the base field $k$ is algebraically closed. In favorable cases, one can determine the geometric quotient $V/\!\!/G = \Spec(\Sym^*(V^\vee))^G$ and can identify certain fibers of the morphism $V \rightarrow V/\!\!/G$ with certain $G$-orbits on $V$.

As an example, consider the three-dimensional adjoint representation of $G = \SL_2$ given by conjugation on the space $V$ of $2 \times 2$ matrices
$v = \bigl( \begin{smallmatrix}
a & b \\
c & -a 
 \end{smallmatrix} \bigr)$
 of trace zero. This is irreducible when the characteristic of $k$ is not equal to $2$, which we assume here. It has the quadratic invariant $q(v) = -\det(v) = bc + a^2$, which generates the full ring of polynomial invariants. Hence $V/\!\!/G$ is isomorphic to the affine line and $q: V \rightarrow V/\!\!/G = \mathbb G_a$. If $v$ and $w$ are two vectors in $V$ with $q(v) = q(w) \neq 0$, then they lie in the same $G$-orbit provided that the field $k$ is separably closed. 

For general fields the situation is more complicated. In our example, let $d$ be a non-zero element of $k$ and let $K$ be the \'etale quadratic algebra $k[x]/ (x^2 - d)$. Then the $G(k)$-orbits on the set of vectors $v \in V$ with $q(v) = d \neq 0$ can be identified with elements in the $2$-group $k^*/NK^*$.  (See \S2.)

The additional complexity in the orbit picture, when $k$ is not separably closed, is what we refer to as arithmetic invariant theory. It can be reformulated using non-abelian Galois cohomology, but that does not give a complete resolution of the problem. Indeed, when the stabilizer $G_v$ of $v$ is smooth, we will see that there is a bijection between the different orbits over $k$ which lie in the orbit of $v$ over the separable closure and the elements in the kernel of the map in Galois cohomology $\gamma: H^1(k,G_v) \rightarrow H^1(k,G)$. Since $\gamma$ is only a map of pointed sets, the computation of this kernel can be non-trivial.

In this paper, we will illustrate some of the issues which remain by considering the {\it regular semi-simple} orbits---i.e., the closed orbits whose stabilizers have minimal dimension---in three representations of the split odd special orthogonal group $G = \SO_{2n+1} = \SO(W)$ over a field $k$ whose characteristic is not equal to $2$. Namely, we will study:
\begin{itemize}
\item
the standard representation $V = W$; 
\item
the adjoint representation $V = \frak{so}(W) = \wedge^2(W)$; and 
\item
the symmetric square representation $V = \Sym^2(W)$. 
\end{itemize}
In the first case, the map $\gamma$ is an injection and the arithmetic
invariant theory is completely determined by the geometric invariant
theory. In the second case, the stabilizer is a maximal torus and the
arithmetic invariant theory is the Lie algebra version of stable
conjugacy classes of regular semi-simple elements. The theory of stable
conjugacy classes, introduced by Langlands \cite{L}--\cite{L2} and developed further
by Shelstad \cite{Sh} and Kottwitz \cite{K1}, forms one of the key tools in the study of endoscopy and
the trace formula.
Here there are the
analogous problems, involving the Galois cohomology of tori, for the
adjoint representations of general reductive groups. In the third
case, there are {\it stable} orbits in the sense of Mumford's geometric
invariant theory \cite{Mu}, i.e., closed orbits whose stabilizers are
finite. Such representations arise more generally in Vinberg's invariant 
theory~(cf.~\cite{PV}, \cite{P}), where the torsion automorphism
corresponds to a regular elliptic class in the extended Weyl group. In
this case, we can use the geometry of pencils of quadrics to describe
an interesting subgroup of classes in the kernel of~$\gamma$.

Although we have focused here primarily on the case of orbits over a general field, a complete arithmetic invariant theory would also consider the orbits of a reductive group over more general rings such as the integers. We end with some remarks on integral orbits for the three representations we have discussed.

We would like to thank Brian Conrad, for his help with \'etale and flat cohomology, and Mark Reeder and Jiu-Kang Yu for introducing us to Vinberg's theory. We would also like to thank Bill Casselman, Wei Ho, Alison Miller, Jean-Pierre Serre, and the anonymous referee for a number of very useful comments on an earlier draft of this paper.  It is a pleasure to dedicate this paper to Nolan Wallach, who introduced one of us (BHG) to the beauties of invariant theory.

\section{Galois cohomology}

Let $k$ be a field, let $k^s$ be a separable closure of $k$, and let $k^a$ denote an algebraic closure containing $k^s$. Let $\Gamma$ be the (profinite) Galois group of $k^s$ over $k$. Let $G$ be a reductive group over $k$ and $V$ an algebraic representation of $G$ on a finite-dimensional $k$-vector space. The problem of classifying the $G(k)$-orbits on $V(k)$ which lie in a fixed $G(k^s)$-orbit can be translated (following Serre \cite[\S I.5]{S}) into the language of Galois cohomology.

Let $v \in V(k)$ be a fixed vector in this orbit, and let $G_v$ be the stabilizer of $v$.  We assume that $G_v$ is a smooth algebraic group over $k$. If $w \in V(k)$ is another vector in the same $G(k^s)$-orbit as $v$, then we may write $w = g(v)$ with $g \in G(k^s)$ well-defined up to right multiplication by $G_v(k^s)$. For every $\sigma \in \Gamma$, we have $g^{\sigma} = g a_{\sigma}$ with $a_{\sigma} \in G_v(k^s)$. The map $\sigma \rightarrow a_{\sigma}$ is a continuous $1$-cocycle  on $\Gamma$ with values in $G_v(k^s)$, whose class in the first cohomology set $H^1(\Gamma,G_v(k^s))$ is independent of the choice of $g$. Since $a_{\sigma} = g^{-1} g^{\sigma}$, this class is trivial when mapped to the cohomology set $H^1(\Gamma,G(k^s))$. We will use the notation $H^1(k,G_v)$ and $H^1(k,G)$ to denote these Galois cohomology sets in this paper.

Reversing the argument, one can show similarly that an element in the kernel of the map of pointed sets
$H^1(k, G_v) \rightarrow H^1(k,G)$ gives rise to a $G(k)$-orbit on $V(k)$ in the $G(k^s)$-orbit of $v$. Hence we obtain the following.

\begin{prop}\label{first}
There is a bijection between the set of $G(k)$-orbits on the vectors $w$ in $V(k)$ that lie in the same $G(k^s)$-orbit as $v$ and the kernel of the map
\begin{equation}
\gamma: H^1(k, G_v) \rightarrow H^1(k,G)
\end{equation}
in Galois cohomology.
 \end{prop}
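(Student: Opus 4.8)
The plan is to make precise the two maps already sketched in the discussion preceding the statement: the assignment $w \mapsto a_{\bullet}$ sending an orbit representative to a cohomology class, and the reverse construction that produces an orbit from a cocycle in the kernel of $\gamma$. I would begin by fixing notation: write $O \subseteq V(k)$ for the set of $w \in V(k)$ lying in the $G(k^s)$-orbit of $v$, and note that $G(k)$ acts on $O$ since both $V(k)$ and the $G(k^s)$-orbit of $v$ are $G(k)$-stable. The goal is a bijection $G(k)\backslash O \;\longleftrightarrow\; \ker\gamma$.

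First I would define the forward map. Given $w \in O$, choose $g \in G(k^s)$ with $g(v) = w$; as noted, $g$ is well-defined up to right multiplication by $G_v(k^s)$, and $a_\sigma := g^{-1} g^\sigma$ lies in $G_v(k^s)$ because $w \in V(k)$ forces $g^\sigma(v) = w = g(v)$. One checks the cocycle identity $a_{\sigma\tau} = a_\sigma a_\tau^\sigma$ directly, and continuity follows since $g$ is defined over a finite separable extension. I would then verify that replacing $g$ by $gh$ with $h \in G_v(k^s)$ changes $a_\bullet$ by a coboundary, so the class $[a_\bullet] \in H^1(k, G_v)$ depends only on $w$; and that replacing $w$ by $\rho(w)$ for $\rho \in G(k)$ — so $g$ by $\rho g$ — leaves $a_\bullet$ literally unchanged (since $\rho^\sigma = \rho$). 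Hence we get a well-defined map $\Phi: G(k)\backslash O \to H^1(k, G_v)$, and since $a_\sigma = g^{-1}g^\sigma$ is a coboundary for the $G$-valued cohomology, the image lands in $\ker\gamma$.

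Next I would construct the inverse. Given a cocycle $a_\bullet$ with values in $G_v(k^s)$ whose image in $H^1(k,G)$ is trivial, choose $g \in G(k^s)$ with $g^{-1}g^\sigma = a_\sigma$ for all $\sigma$ (possible precisely because the class dies in $H^1(k,G)$, using that $H^1(k,G)$ classifies such trivializations). Set $w := g(v)$; then $w^\sigma = g^\sigma(v^\sigma) = g a_\sigma (v) = g(v) = w$ since $v \in V(k)$ and $a_\sigma \in G_v$, so $w \in V(k)$, and visibly $w \in O$. Cohomologous cocycles produce $G(k)$-equivalent vectors $w$, and the resulting class of $w$ depends only on $[a_\bullet]$; this gives $\Psi: \ker\gamma \to G(k)\backslash O$. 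Finally I would check $\Phi \circ \Psi = \mathrm{id}$ and $\Psi \circ \Phi = \mathrm{id}$, both of which are immediate from the constructions since the same element $g$ serves in both directions.

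The argument is essentially formal once the dictionary is set up; the one genuine point requiring care — the main obstacle, such as it is — is the surjectivity of $\Psi$ onto the fibers, i.e.\ that a cocycle with trivial image in $H^1(k,G)$ really can be written as $g^{-1}g^\sigma$ with a \emph{single} $g \in G(k^s)$. This is exactly the statement that the pointed set $H^1(k,G)$ is computed by $\Gamma$-equivariant maps modulo $G(k^s)$, together with the smoothness hypothesis on $G_v$, which guarantees $G_v(k^s)$ is large enough that the cocycle condition is the only constraint; I would invoke Serre \cite[\S I.5]{S} for this standard fact rather than reprove it. Everything else — cocycle identities, coboundary bookkeeping, continuity — is routine.
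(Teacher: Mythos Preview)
Your proposal is correct and follows exactly the approach of the paper, which sketches the forward map in the paragraph preceding the proposition and then simply says ``reversing the argument'' for the inverse; you have merely filled in the details the paper leaves implicit, citing the same reference (Serre, \S I.5). One small quibble: the smoothness of $G_v$ is not what allows a trivial $G$-cocycle to be written as $g^{-1}g^\sigma$ --- that is just the definition of triviality in $H^1(k,G)$ --- but rather is what makes Galois (\'etale) cohomology, as opposed to flat cohomology, the correct setting, as the paper notes in Remark~2.
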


When the stabilizer $G_v$ is smooth over $k$, the set of all vectors $w\in V(k)$ lying in the same $G(k^s)$-orbit as $v$ can be identified with the $k$-points of the quotient variety $G/G_v$, and the central problem of arithmetic invariant theory in this case is to understand the kernel of the map $\gamma$ in Galois cohomology. This is particularly interesting when $k$ is a finite, local, or global field, when the cohomology of the two groups $G_v$ and $G$ can frequently be computed.

In the example of the introduction with $G = \SL_2$ and $V$ the adjoint representation (again assuming char$(k) \neq 2$), let $v$ be a vector in $V(k)$ with $q(v) = d \neq 0$. Then the stabilizer $G_v$ is a maximal torus in $\SL_2$ which is split by the \'etale quadratic algebra $K$. The pointed set $H^1(k,G) = H^1(k,\SL_2)$ is trivial, so all classes in the abelian group $H^1(k,G_v) = k^*/NK^*$ lie in the kernel of $\gamma$. These classes index the orbits of $\SL_2(k)$ on the set $S$ of non-zero vectors $w$ with $q(w) = q(v)$, since this is precisely the set $S$ of vectors $w\in V(k)$ which lie in the same $\SL_2(k^s)$-orbit as $v$. (This illustrates the point that one first has to solve the orbit problem over the separable closure $k^s$, before using Proposition~\ref{first} to descend to orbits over $k$.)

The vanishing of $H^1(k,G)$ occurs whenever $G = \GL_n$ or $G = \SL_n$ or $G = \Sp_{2n}$, and gives an elegant solution to many orbit problems. For example, when the characteristic of $k$ is not equal to $2$, the classification of the non-degenerate orbits of $\SL_n = \SL(W)$ on the symmetric square representation $V = \Sym^2(W^\vee)$ shows that the isomorphism classes of non-degenerate orthogonal spaces $W$ of dimension $n$  over $k$ with a fixed determinant in $k^*/k^{*2}$ correspond bijectively to classes in $H^1(k,G_v) = H^1(k,\SO(W))$ (cf.~\cite[Ch VII, \S29]{K}, \cite[Ch III, Appendix~2, \S4]{S}).  In general, both $H^1(k,G_v)$ and $H^1(k,G)$ are non-trivial, and the determination of the kernel of $\gamma$ remains a challenging problem. 

\begin{remark}
{\em In those cases where the stabilizer $G_v$ is not smooth, it is at least flat of finite type over $k$, so one can replace the map $\gamma$ in Galois (\'etale) cohomology with one in flat (fppf) cohomology. Indeed, the $k$-valued points of $G/G_v$ can always be identified with the possibly larger set $S'$ of vectors $w'$ in $V(k)$ which lie in the same $G(k^a)$-orbit as $v$, where $k^a$ is an algebraic closure of $k^s$. As an example, the group $\mathbb G_m$ acts on $V = \mathbb G_a$ by the formula $\lambda(v) = \lambda^p\cdot v$. The stabilizer of $v = 1$ is the subgroup $\mu_p$, and $G/G_v = \mathbb G_m/\mu_p = \mathbb G_m$. The stabilizer $G_v$ is smooth if the characteristic of $k$ is not equal to $p$, in which case the set $S$ consists of the non-zero elements of the field $k$, and the $G(k)$-orbits on $S$ form a principal homogeneous space for the group $ H^1(k,\mu_p) = k^*/k^{*p}$. If the characteristic of $k$ is equal to $p$, the stabilizer $\mu_p$ is not smooth over $k$. In this case the set $S$ consists of the $p^{\rm th}$ powers in $k^*$. The set $S'$ is equal to the full group of non-zero elements in $k$, which is strictly larger than $S$ when the field $k$ is  imperfect. In the general case one can show that the $G(k)$ orbits on $S' = (G/G_v)(k)$ are in bijection with the kernel of the map $\gamma_f: H^1_f(k,G_v) \rightarrow H^1_f(k,G)$ in flat (fppf) cohomology. In our example, we get a bijection of these orbits with the flat cohomology group $H^1_f(k,\mu_p) = k^*/k^{*p}$, as $H^1_f(k,\mathbb G_m) = 1$. 

The semi-simple orbits in the three representations that we will study in this paper all have smooth stabilizers $G_v$. Hence we only consider the map $\gamma$ in Galois cohomology. }
\end{remark}

\section{Some representations of the split odd special orthogonal group}

Let $k$ be a field, with char$(k) \neq 2$. Let $n \geq 1$ and let $W$ be a fixed non-degenerate, split orthogonal space over $k$, of dimension $2n+1 \geq 3$ and determinant $(-1)^n$ in $k^*/k^{*2}$. Such an orthogonal space is unique up to isomorphism. If $\langle v,w\rangle$ is the bilinear form on $W$, then we may choose an ordered basis $\{e_1, e_2,\ldots,e_n, u, f_n, \ldots,f_2,f_1\}$ of $W$ over $k$ with inner products given by
\begin{equation}\label{dprods}
\begin{array}{c}
\langle e_i,e_j \rangle = \langle f_i,f_j \rangle = \langle e_i,u \rangle = \langle f_i,u \rangle =0,\\[.1in]
 \langle e_i, f_j \rangle = \delta_{ij},\\[.1in]
\langle u,u \rangle = 1.
\end{array}
\end{equation}
The Gram matrix of the bilinear form with respect to this basis (which we will call {\it the standard basis}) is an anti-diagonal matrix. (A good general reference on orthogonal spaces, which gives proofs of these results, is \cite{M}.)

Let $T: W \rightarrow W$ be a $k$-linear transformation. We define the {\it adjoint transformation} $T^*$ by the formula
$$\langle Tv,w \rangle = \langle v,T^*w \rangle.$$
The matrix of $T^*$ in our standard basis is obtained from the matrix of $T$ by reflection around the anti-diagonal. In particular, we have the identity $\det(T) = \det(T^*)$.
We say a linear transformation $g: W \rightarrow W$ is {\it orthogonal} if
$\langle gv,gw \rangle = \langle v,w \rangle.$
Then $g$ is invertible, with $g^{-1} = g^*$, and $\det(g) = \pm1$ in $k^*$. We define the {\it special orthogonal group} $\SO(W)$ of $W$ by
\begin{equation}\label{orthdef}
\SO(W) := \{g \in \GL(W):  gg^* = g^*g = 1, ~ \det(g) = 1\}.
\end{equation}
We are going to consider the arithmetic invariant theory for three representations $V$ of the reductive group $G = \SO(W)$ over $k$. 

The first is the standard representation $V = W$, which is irreducible and symmetrically self-dual 
(isomorphic to its dual by a symmetric bilinear pairing)
of dimension~$2n+1$. Here we will see that the invariant polynomial $q_2(v) := \langle v,v \rangle$  generates the ring of polynomial invariants and separates the non-zero orbits over $k$. 

The second is the adjoint representation $V = \frak{so}(W)$, which is irreducible and symmetrically self-dual 
of dimension $2n^2 + n$. This representation is isomorphic to the exterior square $\wedge^2(W)$ of $W$, and can be realized as the space of skew self-adjoint operators:
\begin{equation}
V =  \wedge^2(W) = \{T: W \rightarrow W: T =- T^*\},
\end{equation}
where $g \in G$ acts by conjugation: $T \mapsto gTg^{-1} = gTg^*$. The Lie bracket on $V$ is given by the formula $[T_1,T_2] = T_1T_2 - T_2T_1$ and the duality by $\langle T_1,T_2 \rangle = \Trace(T_1T_2)$. Here the theory of $G(k)$-orbits in a fixed $G(k^s)$-orbit is the Lie algebra version of stable conjugacy classes for the group $G = \SO(W)$.

The third is a representation $V$ which arises in Vinberg's theory,
from an outer involution $\theta$ of the group $\GL(W)$. It is
isomorphic to the symmetric square $\Sym^2(W)$ of $W$, and can be
realized as the space of self-adjoint operators:
\begin{equation}
V = \Sym^2(W) = \{T: W \rightarrow W: T = T^*\},
\end{equation}
where again $G = \SO(W)$ acts by conjugation. This representation has dimension $2n^2 + 3n + 1$ and is symmetrically self-dual by the pairing $\langle T_1,T_2 \rangle = \Trace (T_1T_2).$ We will see that there are stable orbits, 
and that the arithmetic invariant theory of the stable orbits involves the arithmetic of hyperelliptic curves of genus $n$ over $k$, with a $k$-rational Weierstrass point.

We note that the third representation $V$ is not irreducible, as it
contains the trivial subspace spanned by the identity matrix, and has
a non-trivial invariant linear form given by the trace. When the
characteristic of $k$ does not divide $2n+1 = \dim(W)$, the
representation $V$ is the direct sum of the trivial subspace and the
kernel of the trace map, and the latter is irreducible and
symmetrically self-dual of dimension $2n^2 + 3n$. When the
characteristic of $k$ divides $2n+1$ the trivial subspace is contained
in the kernel of the trace. In this case $V$ has two trivial factors
and an irreducible factor of dimension $2n^2 + 3n -1$ in its
composition series.

\section{Invariant polynomials and the discriminant}

In the standard representation $V=W$ of $G = \SO(W)$, the quadratic invariant $q_2(v) = \langle v,v \rangle$ generates the ring of invariant polynomials. We define $\Delta = q_2$ in this case. When $\Delta(v) \neq 0$, the stabilizer $G_v$ is the reductive subgroup $\SO(U)$, where $U$ is the hyperplane in $W$ of vectors orthogonal to $v$. 

In the second and third representations, the group $\SO(W)$ acts by conjugation on the subspace $V$ of $\End(W)$. Hence the characteristic polynomial of an operator $T$ is an invariant of the $G(k)$-orbit. 

For the adjoint representation, the operator $T$ is skew self-adjoint and its characteristic polynomial has the form
$$f(x) = \det(xI - T) =  x^{2n+1} + c_2x^{2n-1} + c_4x^{2n-3} + \cdots + c_{2n}x = xg(x^2).$$
with coefficients $c_{2m}\in k$. The coefficients $c_{2m}$ are polynomial invariants of the representation, with $\deg(c_{2m}) = 2m$.  These polynomials are algebraically independent and generate the full ring of polynomial invariants on $V = \frak{so}(W)$ over $k$ \cite[Ch 8, \S8.3, \S 13.2, VI] {B}.. An important polynomial invariant, of degree $2n(2n+1)$, is the discriminant $\Delta$ of the characteristic polynomial of $T$:
$$\Delta = \Delta(c_2,c_4,\ldots,c_{2n}) = \disc f(x). $$
This is non-zero in $k$ precisely when the polynomial $f(x)$ is separable, so has $2n+1$ distinct roots in the separable closure $k^s$ of $k$. The condition $\Delta(T) \neq 0$ defines the regular semi-simple orbits in the Lie algebra. For such an orbit, we will see that the stabilizer $G_T$ is a maximal torus in $G$, of dimension~$n$ over $k$. 

For the third representation $V$ on self-adjoint operators, the characteristic polynomial $f(x)$ of $T$ can be any monic polynomial of degree $2n+1$; we write
$$f(x) = \det(xI - T) =  x^{2n+1} + c_1x^{2n} + c_2x^{2n-1} + \cdots + c_{2n}x  + c_{2n+1}$$
with coefficients $c_m\in k$. Again the $c_m$ give algebraically independent polynomial invariants, with $\deg(c_m)=m$, which generate the full ring of polynomial invariants on $V$ over $k$. The discriminant 
$$\Delta = \Delta(c_1,c_2,\ldots,c_{2n+1}) = \disc f(x)$$
is defined as before, and is non-zero when $f(x)$ is separable. We will see that the condition $\Delta(T) \neq 0$ defines the stable orbits of $G$ on $V$. For such an orbit, we will see that the stabilizer $G_T$ is a finite commutative group scheme of order $2^{2n}$ over $k$, which embeds as a Jordan subgroup scheme of $G$ (see~\cite[Ch 3]{KT}).


\section{The orbits with non-zero discriminant}

In this section, for each of the three representations $V$, we exhibit an orbit for $G$ where the invariant polynomials described above take arbitrary values in $k$, subject to the single restriction that $\Delta \neq 0$. We calculate the stabilizer $G_v$ and its cohomology $H^1(k, G_v)$ in terms of the values of the invariant polynomials on $v$. We also give an explicit description of the map $\gamma: H^1(k,G_v) \rightarrow H^1(k,G)$. We note that all three representations arise naturally in Vinberg's invariant theory, and the representative orbits that we will construct are in the Kostant section (cf. \cite{P}).

When $V= W$ is the standard representation, let $d$ be an element of $k^*$. The vector $v = e_1 + df_1$ has $q_2(v) = \Delta (v) = d$. The stabilizer $G_v$ acts on the orthogonal complement $U$ of the non-degnerate line $kv$ in $W$, which is a quasi-split orthogonal space of dimension $2n$ and discriminant $d$ in $k^*/k^{*2}$. (The {\it discriminant} of an orthogonal space of dimension $2n$ is defined as $(-1)^n$ times its determinant.) This gives an identification $G_v = \SO(U)$, where the special orthogonal group $\SO(U)$ is quasi-split over $k$ and split by $k(\sqrt d)$. Witt's extension theorem \cite[Ch 1]{M} shows that all vectors $w$ with $q_2(w) = d$ lie in the $G(k)$-orbit of $v$, so the invariant polynomials separate the orbits over $k$ with non-zero discriminant. One can also show that there is a single non-zero orbit with $q_2(v) = 0$, represented by the vector $v = e_1 = e_1 + 0f_1.$

The cohomology set $H^1(k,\SO(U))$ classifies non-degenerate orthogonal spaces $U'$ of dimension~$2n$ and discriminant $d$ over $k$, and the cohomology set $H^1(k, \SO(W))$ classifies non-degenerate orthogonal spaces $W'$ of dimension $2n+1$ and determinant
$(-1)^n$ over $k$, with the trivial class corresponding to the split space $W$. The map
$$\gamma: H^1(k,G_v) = H^1(k, \SO(U)) \longrightarrow H^1(k,G) = H^1(k, \SO(W))$$
is given explicitly by mapping the space $U'$ to the space $W' = U' + \langle d \rangle$. 
Witt's cancellation theorem~\cite{M} shows that the map~$\gamma$ is an injection of sets in this case, so the arithmetic invariant theory for the standard representation of any odd orthogonal group is the same as its geometric invariant theory.

\vspace{.15in}
For the second representation $V = \frak{so}(W) = \wedge^2(W)$, let $$f(x) = x^{2n+1} + c_2x^{2n-1} + c_4x^{2n-3} + \cdots + c_{2n}x$$ be a polynomial in $k[x]$ with non-zero discriminant. We will construct a skew self-adjoint operator~$T$ on $W$ with characteristic polynomial $f(x)$. Since $f(x) = xh(x) = xg(x^2)$, we have $$\disc f(x) = c_{2n}^2 \disc h(x) = (-4)^n c_{2n}^3 \disc g(x)^2.$$
 Let $K = k[x] /(g(x))$, $E = k[x]/(h(x))$, and $L = k[x]/(f(x))$. By our assumption that $\Delta \neq 0$, these are \'etale $k$-algebras of ranks $n$, $2n$, and $2n+1$ respectively. We have $L = E \oplus k$. Furthermore the map $x \to -x$ induces an involution $\tau$ of the algebras $E$ and of $L$, with fixed algebras $K$ and $K \oplus k$ respectively. 

Let $\beta$ be the image of $x$ in $L = k[x]/(f(x))$, so $f(\beta) = 0$ in $L$ and $f'(\beta)$ is a unit in $L^*$, We define a symmetric bilinear form $\langle \;,\: \rangle$  on the $k$-vector space $L = k + k\beta + k\beta^2 + \cdots+k\beta^{2n}$ by taking 
\begin{equation}
\langle \lambda, \mu \rangle :=  \mbox{ the coefficient of $\beta^{2n}$ in the product $(-1)^n\lambda\mu^{\tau}$.}
\end{equation}
This is non-degenerate, of determinant~$(-1)^n$, and the map $t(\lambda) = \beta\lambda$ is skew self-adjoint, with characteristic polynomial~$f(x)$. Finally, the subspace $M = k + k\beta + \cdots + k\beta^{n-1}$ is isotropic of dimension~$n$, so the orthogonal space~$L$ is split and isomorphic to~$W$ over~$k$. Choosing an isometry $\theta: L \rightarrow W$ we obtain a skew self-adjoint operator $T = \theta t\theta^{-1}$ on $W$ with the desired separable characteristic polynomial. Since
the isometry $\theta$ is unique up to composition with an orthogonal transformation of~$W$, the orbit of~$T$ is well-defined. The stabilizer of $T$ in ${\rm O}(W)$ has $k$-points $\{\lambda \in L^*: \lambda^{1 + \tau} = 1\}$. The subgroup $G_T$ which fixes $T$ is a maximal torus in $G = \SO(W)$, isomorphic to the torus $\Res_{K/k} U_1(E/K)$ of dimension $n$ over $k$.

Over the separable closure $k^s$ of $k$, any skew self-adjoint operator $S$ with (separable) characteristic polynomial $f(x)$ is in the same orbit of $T$. Indeed, since $f(x)$ is separable, it is also the minimal polynomial of $T$ and $S$, so we can find an element $g$ in $\GL(W)$ with $S = gTg^{-1}$. Since both operators are skew self-adjoint, the product $g^*g$ is in the centralizer of $T$ in $\GL(W)$. The centralizer of $T$ in $\End(W)$ is the algebra $k[T] = L$. Since  $g^*g$ is self-adjoint in $L^*$, and its determinant is a square in $k^*$, we see that $g^*g$ is an element of the subgroup $K^* \times k^{*2}$. Over the separable closure, every element of $K^* \times k^{*2}$ is a norm from $L^*$: $g^*g = h^{1+\tau}$. Then $gh^{-1}$ is an orthogonal transformation of $W$ over $k^s$ mapping $T$ to $S$. Hence $S$ is in the $\SO(W)(k^s)$-orbit of $T$.

To understand the orbits with a fixed separable characteristic polynomial over $k$, we need an explicit form of the map $\gamma$ in Galois cohomology. Since the stabilizer of $T$ is abelian, the pointed set $H^1(k,G_T)$ is an abelian group, which is isomorphic to $K^*/NE^*$ by Hilbert's Theorem $90$. The map 
$$\gamma : K^*/NE^* = H^1(k,G_T) \longrightarrow H^1(k,G) = H^1(k, \SO(W))$$
is given explicitly as follows. We first associate to an element $\kappa\in K^*$ the element $\alpha = (\kappa, 1)$ in $(L^{\tau})^* = K^*\times k^*$, with square norm from $L^*$ to $k^*$. We then associate to $\alpha$ the vector space $L$ with symmetric bilinear form
\begin{equation}
\langle \lambda, \mu \rangle_{\alpha} :=  \mbox{the coefficient of $\beta^{2n}$ in the product $(-1)^n\alpha\lambda\mu^{\tau}$.}
\end{equation}
 This orthogonal space $W_\kappa$ has dimension $2n+1$ and determinant $(-1)^n$ over $k$, and its isomorphism class depends only on the class of $\kappa$ in the quotient group $K^*/NE^* = H^1(k,G_T)$.

\begin{lemma}\label{co}
The orthogonal space $W_\kappa$ represents the class $\gamma(\kappa)$ in $H^1(k,\SO(W))$.
\end{lemma}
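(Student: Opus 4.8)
The plan is to show directly that the cocycle attached to the orbit of the twisted operator $t_\kappa$ on $W_\kappa$, under the recipe of Section 2, is the image under $\gamma$ of the class of $\kappa$ in $H^1(k,G_T) = K^*/NE^*$. First I would set up the twisted operator: on the $k$-vector space $L$ equipped with the form $\langle\ ,\ \rangle_\alpha$ (where $\alpha = (\kappa,1) \in (L^\tau)^* = K^* \times k^*$), the multiplication map $t(\lambda) = \beta\lambda$ is still skew self-adjoint, since $\tau$ fixes $\alpha$ and the computation is identical to the untwisted case. By the argument already given in the excerpt (applied over $k^s$, where $\kappa$ becomes a norm from $L^*$), the orthogonal space $W_\kappa$ is again split, hence isometric to $W$ over $k$; choosing such an isometry $\theta_\kappa\colon W_\kappa \to W$ and conjugating $t$ we obtain a skew self-adjoint operator $T_\kappa = \theta_\kappa t \theta_\kappa^{-1}$ on $W$ with characteristic polynomial $f(x)$, hence in the $\SO(W)(k^s)$-orbit of $T$.

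Next I would compute the cocycle. Over $k^s$, since $\kappa = h^{1+\tau}$ for some $h \in L^*$, multiplication by $h$ gives an isometry $m_h\colon (L, \langle\ ,\ \rangle) \to (L,\langle\ ,\ \rangle_\alpha)$ over $k^s$ intertwining $t$ with $t$ (as both are multiplication by $\beta$). Composing, $g = \theta_\kappa \circ m_h \circ \theta^{-1} \in \GL(W)(k^s)$ satisfies $g T g^{-1} = T_\kappa$, and I would verify $g$ is orthogonal, so $g \in G(k^s)$ and $T_\kappa = g(T)$ witnesses that $T_\kappa$ lies in the $G(k^s)$-orbit of $T$. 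The associated cocycle is $a_\sigma = g^{-1} g^\sigma$; since $\theta_\kappa, \theta$ are defined over $k$ and $t$ commutes with everything relevant, $a_\sigma$ reduces to $m_h^{-1} m_{h^\sigma} = $ multiplication by $h^{-1}h^\sigma$ on $L$, i.e. the standard cocycle $\sigma \mapsto h^{-1}h^\sigma$ representing the class of $\kappa = h^{1+\tau}$ in $K^*/NE^* = H^1(k, U_1(E/K))$ via Hilbert 90. Thus the class of $T_\kappa$ in the kernel of $\gamma$ corresponds, under the bijection of Proposition~\ref{first}, exactly to $\kappa \in H^1(k,G_T)$.

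Finally, to conclude that $W_\kappa$ \emph{represents} $\gamma(\kappa)$ in $H^1(k,\SO(W))$, I would invoke the standard dictionary between $H^1(k,\SO(W))$ and isomorphism classes of $(2n+1)$-dimensional orthogonal spaces of determinant $(-1)^n$ (already cited in the excerpt): the image of the orbit-cocycle $a_\sigma$ under $\gamma$, by construction, is the class of the twisted form of $W$ obtained by transporting the standard form along $g$, and tracing through the definitions this twisted form is precisely $(L, \langle\ ,\ \rangle_\alpha) = W_\kappa$. The main obstacle I anticipate is the bookkeeping in this last identification — making sure the twist of the quadratic space produced by the cocycle $a_\sigma$ (which a priori lives on $W$) is canonically the space $(L,\langle\ ,\ \rangle_\alpha)$, rather than some space merely isometric to it over $k^s$; this requires carefully matching the $\sigma$-action on $L \otimes k^s$ twisted by $a_\sigma$ with the natural $\Gamma$-action on $L \otimes k^s$, which comes down to the identity $\langle m_h \lambda, m_h\mu\rangle_\alpha = \langle \lambda,\mu\rangle$ over $k^s$ together with the fact that $h^{1+\tau} = \kappa$. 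The rest is routine verification that the various maps are isometries and that $t$ is skew self-adjoint for $\langle\ ,\ \rangle_\alpha$.
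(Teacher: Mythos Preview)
There is a genuine gap in your first paragraph. You assert that ``the orthogonal space $W_\kappa$ is again split, hence isometric to $W$ over $k$,'' and then choose an isometry $\theta_\kappa$ defined over $k$. But $W_\kappa$ being split over $k$ is \emph{exactly} the condition that $\gamma(\kappa)$ is the trivial class in $H^1(k,\SO(W))$; for a general $\kappa$ this fails, and no $k$-rational isometry $\theta_\kappa$ exists. Your construction of $T_\kappa$ as an operator on $W$ therefore only makes sense when $\kappa\in\ker\gamma$, and the cocycle $a_\sigma=g^{-1}g^\sigma$ you compute is then automatically a coboundary in $\SO(W)(k^s)$ (that is how it arose). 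So at best your argument shows the implication ``$W_\kappa$ split $\Rightarrow$ $\gamma(\kappa)$ trivial,'' which is only the special case of the lemma where both sides vanish.

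The paper's proof avoids this circularity by never constructing an operator on $W$ at all. It starts directly with the abstract cocycle $g_\sigma$ in $G_T(k^s)$ representing $\kappa$, pushes it into $\SO(W)(k^s)$ via $\gamma$, and then uses the standard recipe for reading off the twisted orthogonal space: trivialize $g_\sigma = h^{-1}h^\sigma$ inside $\GL(W)(k^s)$ and form the new inner product $\langle v,w\rangle^* = \langle h^{-1}v,\,h^{-1}w\rangle$. The key (and short) observation is that since $G_T$ already sits inside the maximal torus $\Res_{L/k}\mathbb{G}_m$ of $\GL(W)$, which itself has trivial $H^1$, one may choose the trivializing element $h$ to lie in $(L\otimes k^s)^*$ with $h^{1+\tau}=\alpha$. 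Plugging multiplication-by-$h^{-1}$ into the original form on $L$ yields precisely $\langle\,,\,\rangle_\alpha$. Your third paragraph is groping toward exactly this computation (the ``main obstacle'' you flag is in fact the whole proof), but you should discard the detour through $\theta_\kappa$ and $T_\kappa$ entirely: the cocycle representing $\kappa$ is given to you by Hilbert~90 without ever needing to realize it as an orbit cocycle.
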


\begin{proof}
We first recall the recipe for associating to a cocycle $g_{\sigma}$ on the Galois group with values in $\SO(W)(k^s)$ a new orthogonal space $W'$ over $k$. We use the inclusion $\SO(W) \rightarrow \GL(W)$ and the triviality of  $H^1(k,\GL(W))$ to write $g_{\sigma} = h^{-1}h^{\sigma}$ for an element $h \in \GL(W)(k^s)$. We then define a new non-degenerate symmetric bilinear form on $W$ by the formula
\begin{equation}\label{newprod}
\langle v,w \rangle^* = \langle h^{-1}v, h^{-1}w \rangle.
\end{equation}
This takes values in $k$ and defines the space $W'$, which has dimension $2n+1$ and determinant $(-1)^n$. The isomorphism class of $W'$ over $k$ depends only on the cohomology class of the cocycle $g_{\sigma}$ in $H^1(k,\SO(W))$.

In our case, the cocycle $g_{\sigma}$ representing $\gamma(\kappa)$ comes from a cocycle with values in the stabilizer $G_v$. This is a maximal torus in $\SO(W)$, which is a subgroup of the maximal torus $\Res_{L/k}\mathbb G_m$ of $\GL(W)$. This torus already has trivial Galois cohomology, so we can write $g_{\sigma} = h^{\sigma}/h$ with $h \in (L \otimes k^s)^*$ satisfying $h^{1 + \tau} = \alpha$. Substituting this particular $h$ into formula (\ref{newprod}) for the new inner product on $W$ completes the proof. 
\end{proof}

We note that the class $\kappa$ above will be in the kernel of $\gamma$ precisely when the quadratic space $W'$ with bilinear form $\langle\;,\,\rangle_{\alpha}$ is split. Such classes give additional orbits of $\SO(W)$ on ${\frak{so}}(W) = \wedge^2(W)$ over $k$ with characteristic polynomial $f(x)$.

\vspace{.15in}
The analysis for the third representation $V = \Sym^2(W)$ is similar. Here we start with an arbitrary monic separable polynomial $f(x) = x^{2n+1} + c_1x^{2n} + \cdots + c_{2n+1}$ and wish to construct a self-adjoint operator $T$ on $W$ with characteristic polynomial $f(x)$. We let $L = k[x]/(f(x))$, which is an \'etale $k$-algebra of rank $2n+1$, and let $\beta$ be the image of $x$ in $L$.  We define a symmetric bilinear form $\langle \lambda, \mu \rangle$ on $L = k + k\beta + \cdots + k\beta^{2n}$ by taking the coefficient of $\beta^{2n}$ in the product $\lambda\mu$. This is non-degenerate of determinant $(-1)^n$, and the map $t(\lambda) = \beta\lambda$ is self-adjoint, with characteristic polynomial $f(x)$. Finally, the subspace $M = k + k\beta + \cdots + k\beta^{n-1}$ is isotropic of dimension $n$, so the orthogonal space $L$ is split and isomorphic to $W$ over $k$. Choosing an isometry $\theta: L \rightarrow W$, we obtain a self-adjoint operator $T = \theta t\theta^{-1}$ on $W$ with the desired separable characteristic polynomial. Since
the isometry $\theta$ is unique up to composition with an orthogonal transformation of $W$, the orbit of $T$ is well-defined. The stabilizer of $T$ in $\O(W)$ has $k$-points $\{\lambda \in L^*: \lambda^2 = 1\}$. The subgroup $G_T$ in $\SO(W)$ which fixes $T$ is the finite \'etale group scheme $A$ of order $2^{2n}$, which is the kernel of the norm map $\Res_{L/k} (\mu_2) \rightarrow \mu_2$. 

Over the separable closure $k^s$ of $k$, any self-adjoint operator $S$ with (separable) characteristic polynomial $f(x)$ is in the same orbit as $T$. Indeed, since $f(x)$ is separable, it is also the minimal polynomial of $T$ and $S$, so we can find an element $g\in\GL(W)$ with $S = gTg^{-1}$. Since both operators are self-adjoint, the product $g^*g$ is in the centralizer of $T$ in $\GL(W)$. The centralizer of $T$ in $\End(W)$ is the algebra $k[T] = L$, so $g^*g$ is an element of $L^*$. Over the separable closure, every element of $L^*$ is a square: $g^*g = h^2$. Then $gh^{-1}$ is an orthogonal transformation of $W$ over $k^s$ mapping $T$ to $S$. Hence $S$ is in the $\SO(W)(k^s)$-orbit of $T$.

We now consider the orbits with a fixed separable characteristic polynomial over $k$. Since the stabilizer of $T$ is again abelian, the pointed set $H^1(k,G_T)$ is an abelian group which is isomorphic to $(L^*/L^{*2})_{N=1}$ by Kummer theory. The map 
$$\gamma : H^1(k,G_T) = (L^*/L^{*2})_{N=1}  \longrightarrow H^1(k,G) = H^1(k, \SO(W))$$
is given explicitly as follows. We associate to an element $\alpha$ in $(L^*)_{N=1}$ the orthogonal space $L$ with bilinear form $\langle \lambda, \mu \rangle_{\alpha}$ given by the coefficient of $\beta^{2n}$ in the product $\alpha\lambda\mu$. This orthogonal space has dimension $2n+1$ and determinant $(-1)^n$ over $k$. Its isomorphism class over $k$ depends only on the image of $\alpha$ in the quotient group $(L^*/L^{*2})_{N=1} = H^1(k,G_T)$. This orthogonal space represents the class $\gamma(\alpha)$ in $H^1(k,\SO(W))$. The proof is the same as that of Lemma~\ref{co}. We first observe that the map taking the cocycle $g_{\sigma}$ from $G_T$ to $\SO(W)$ to $\GL(W)$ can also be obtained by mapping $G_T$ to the maximal torus $\Res_{L/k} \mathbb G_m$ in $\GL(W)$. This torus has trivial cohomology, so $\alpha = h^2$ with $h \in (L\otimes k^s)^*$, and this choice of $h$ gives the inner product $\langle \lambda, \mu \rangle_{\alpha}$. The class $\alpha$ will be in the kernel of $\gamma$ precisely when the quadratic space  $L$ with bilinear form $\langle\;,\,\rangle_{\alpha}$ is split; such classes give additional orbits of $\SO(W)$ on $\Sym^2(W)$ over $k$ with characteristic polynomial $f(x)$.

We summarize what we have established for the representations $V = {\frak{so}}(W)$ and $V = \Sym^2(W)$.

\begin{prop}\label{rationalws}
For each monic separable polynomial $f(x)$ of degree $2n+1$ over $k$ of the form $f(x) = xg(x^2)$ there is a distinguished $\SO(W)(k)$-orbit of skew self-adjoint operators $T$ on $W$ with characteristic polynomial $f(x)$. All other orbits on $\wedge^2(W)$ with this characteristic polynomial lie in the $\SO(W)(k^s)$-orbit of $T$, and correspond bijectively to the non-identity classes in the kernel of $\gamma: K^*/NE^* \rightarrow H^1(k,\SO(W)),$ where $K=k[x]/(g(x))$ and $E=k[x]/(g(x^2))$. 

For each monic separable polynomial $f(x)$ of degree $2n+1$ over $k$ there is a distinguished $\SO(W)(k)$-orbit of self-adjoint operators $T$ on $W$ with characteristic polynomial $f(x)$. All other orbits on $\Sym^2(W)$ with this characteristic polynomial lie in the $\SO(W)(k^s)$-orbit of $T$, and correspond bijectively to the non-identity classes in the kernel of $\gamma: (L^*/L^{*2})_{N=1} \rightarrow H^1(k,\SO(W)),$
where $L=k[x]/(f(x))$.
\end{prop}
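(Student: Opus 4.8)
The plan is to assemble, in the shape of Proposition~\ref{first}, the pieces that have already been put in place above, treating the two representations $V = \wedge^2(W)$ and $V = \Sym^2(W)$ in parallel; once the stabilizer and its Galois cohomology are identified, the two arguments are formally the same.

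First I would record the distinguished orbit. Given a monic separable $f(x)$ (of the shape $xg(x^2)$ in the $\wedge^2$ case), the \'etale algebra $L = k[x]/(f(x))$ together with the explicit symmetric form on $L = k + k\beta + \cdots + k\beta^{2n}$ --- the coefficient of $\beta^{2n}$ in $(-1)^n\lambda\mu^\tau$, respectively in $\lambda\mu$ --- is a split orthogonal space of dimension $2n+1$ and determinant $(-1)^n$, hence isometric to $W$; transporting the multiplication operator through any isometry $\theta\colon L \to W$ gives a skew self-adjoint (respectively self-adjoint) operator $T$ with characteristic polynomial $f(x)$. I would then check that the $\SO(W)(k)$-orbit of $T$ does not depend on $\theta$: two isometries differ by an element of ${\rm O}(W)(k)$, and the stabilizer of $T$ in ${\rm O}(W)(k)$ --- described above as a subgroup of $L^*$ --- contains an element of determinant $-1$ (the unit of $L$ that is $1$ on the degree-$2n$ factor and $-1$ on the degree-$1$ factor in the $\wedge^2$ case, and $-1$ itself in the $\Sym^2$ case), so one may always correct $\theta$ by an element of $\SO(W)(k)$ without changing $T$.

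Next I would invoke the separable-closure computation already carried out: since $f(x)$ is separable it is the minimal polynomial, the centralizer of $T$ in $\End(W)$ is $L$, and the self-adjoint factor $g^*g$ arising when one conjugates a second operator onto $T$ is absorbed into a norm (respectively a square) from $L^*$ over $k^s$; hence every $\SO(W)(k)$-orbit on $V(k)$ with characteristic polynomial $f(x)$ lies in the single $\SO(W)(k^s)$-orbit of $T$. The stabilizer $G_T$ is smooth --- a maximal torus $\Res_{K/k}U_1(E/K)$ in the $\wedge^2$ case, the finite \'etale group scheme $A = \ker(\Res_{L/k}\mu_2 \to \mu_2)$ in the $\Sym^2$ case --- so Proposition~\ref{first} applies verbatim and gives a bijection between these orbits and $\ker(\gamma\colon H^1(k,G_T) \to H^1(k,\SO(W)))$, with the base orbit of $T$ corresponding to the identity class. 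Identifying $H^1(k,G_T)$ with $K^*/NE^*$ by Hilbert's Theorem~90 in the first case and with $(L^*/L^{*2})_{N=1}$ by Kummer theory in the second, and deleting the base point on each side, yields the stated bijection with the non-identity classes; the explicit form of $\gamma$ is recorded in Lemma~\ref{co} and its $\Sym^2$ analogue.

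The one step here that is more than bookkeeping is the well-definedness of the distinguished $\SO(W)(k)$-orbit, that is, the assertion that the stabilizer of $T$ in the full group ${\rm O}(W)$ meets the non-identity coset of $\SO(W)$; this is exactly the point at which the concrete description of the stabilizer as a subgroup of $L^*$ is used. Everything else is a direct appeal to the constructions above, the stabilizer computations, the $k^s$-orbit argument, and Proposition~\ref{first}.
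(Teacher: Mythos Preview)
Your proposal is correct and matches the paper's approach: the proposition is stated there as a summary of the constructions and arguments already carried out in the preceding section, with no separate proof, and you have assembled exactly those pieces via Proposition~\ref{first}. The one point you make explicit that the paper glosses over is the well-definedness of the distinguished $\SO(W)(k)$-orbit (rather than merely the ${\rm O}(W)(k)$-orbit) of $T$; your observation that the ${\rm O}(W)$-stabilizer of $T$ in $L^*$ contains an element of determinant $-1$ is precisely what is needed to close that small gap.
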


\section{Stable orbits and hyperelliptic curves}

For both representations $V = \wedge^2(W)$ and $V = \Sym^2(W)$ of $G = \SO(W)$ we associated to the distinguished orbit $T$ with separable characteristic polynomial $f(x)$ and any class $\alpha$ in the cohomology group $H^1(k,G_T)$ a symmetric bilinear form $\langle \lambda, \mu \rangle_{\alpha}$ on the $k$-vector space $L = k[x]/ (f(x))$. The class $\alpha$ is in the kernel of the map $\gamma: H^1(k, G_T) \rightarrow H^1(k,G)$ precisely when this quadratic space is split over $k$. However, exhibiting specific classes $\alpha \neq 1$ where this space is split is a difficult general problem, so it is difficult to exhibit other orbits with this characteristic polynomial.

In the case of the third representation $V = \Sym^2(W)$, the orbits $T$ with $\Delta(T) \neq 0$ are stable; namely, 
they are closed (defined by the values of the invariant polynomials over the separable closure) and have finite stabilizer (the commutative group scheme $A = \Res_{L/k} (\mu_2)_{N = 1}$ of order $2^{2n}$). In this case, we will use some 
results in algebraic geometry, on hyperelliptic curves with a Weierstrass point and the Fano variety of the complete intersection of two quadrics in $\mathbb P(L \oplus k)$, to produce certain classes in the kernel of the map $\gamma: H^1(k,A) \rightarrow H^1(k,\SO(W))$.

Let $C$ be the smooth projective hyperelliptic curve of genus $n$ over $k$ with affine equation
$y^2 = f(x)$ and $k$-rational Weierstrass point $P$ above $x = \infty$. The functions on $C$ which are regular outside of $P$ form an integral domain:
$$H^0(C - P, O_{C - P}) = k[x,y]/ (y^2 = f(x)) = k[x, \sqrt {f(x)}].$$

The complete curve $C$ is covered by this affine open subset $U_1$, together with the affine open
subset $U_2$ associated to the equation $w^2 = v^{2n+2}f(1/v)$ and containing the point $P = (0,0)$. The gluing of $U_1$ and $U_2$ is by $(v,w) = (1/x, y/x^{n+1})$ and $(x,y) = (1/v,w/v^{n+1})$ wherever these maps are defined.
Let $J$ denote the Jacobian of $C$ over $k$ and let $J[2]$ the kernel of multiplication by $2$ on $J$. This is a finite \'etale group scheme of order $2^{2n}$ over $k$.

\begin{lemma}\label{lemm}
The group scheme $J[2]$ of $2$-torsion on the Jacobian of $C$ is canonically isomorphic to the stabilizer $A = \Res_{L/k}(\mu_2)_{N=1}$ of the orbit $T$ in $\SO(W)$.
\end{lemma}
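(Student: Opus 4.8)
The plan is to prove the isomorphism over the separable closure $k^s$ as an isomorphism of $\Gamma$-modules, which is enough: since $\mathrm{char}(k)\neq 2$, both $J[2]$ and $A$ are finite \'etale commutative group schemes over $k$, and passing to $k^s$-points is an equivalence from such schemes to finite abelian groups with continuous $\Gamma$-action. The first step is to make both sides explicit. The roots $\beta_1,\dots,\beta_{2n+1}$ of $f$ in $k^s$ are canonically the points of $\Spec(L)(k^s)$, and the closed subscheme $\{y=0\}$ of the affine chart $U_1$ is exactly $\Spec(k[x]/(f(x)))=\Spec(L)$; write $P_i=(\beta_i,0)\in C(k^s)$ for the corresponding affine Weierstrass points, so that $\{P\}\cup\{P_i\}$ is the set of $2n+2$ fixed points of the hyperelliptic involution. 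On the other side, $(\Res_{L/k}\mu_2)(k^s)$ is the group of functions from the root set $R=\{\beta_i\}$ to $\{\pm 1\}$ with the natural permutation action of $\Gamma$, the norm being the product of the values; thus $A(k^s)$ is the group of even-cardinality subsets $S\subseteq R$ under symmetric difference.

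Next I would write down the canonical map $A\to J[2]$ on $k^s$-points by sending a subset $S$ to the class of the degree-zero divisor $\sum_{i\in S}(P_i-P)$. This is manifestly $\Gamma$-equivariant and involves no choices, so it is canonical. It lands in $J[2]$ because the rational function $x-\beta_i$ on $C$ has divisor $2P_i-2P$, so each class $[P_i-P]$ is $2$-torsion on $J$; for the same reason $S\mapsto\sum_{i\in S}[P_i-P]$ is a homomorphism from $(\mathbb F_2[R],\triangle)$ into $J$ (equivalently, one extends $\Res_{L/k}\mu_2\to J$, $\beta_i\mapsto[P_i-P]$, and restricts to the norm-one subgroup $A$).

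It then remains to prove bijectivity. I would compute the kernel of the homomorphism $\mathbb F_2[R]\to \Pic^0(C_{k^s})$: a subset $S$ is killed precisely when $\sum_{i\in S}P_i\sim |S|\cdot P$, i.e.\ when there is a rational function with a single pole, of order $|S|$, at $P$ and zero divisor $\sum_{i\in S}P_i$. Since $H^0(C-P,O_{C-P})=k^s[x,y]/(y^2-f(x))$, such a function has the shape $A(x)+B(x)y$, with pole order $2\deg A$ when $B=0$ and $2\deg B+2n+1$ when $B\neq 0$; moreover the zeros of such a function lying over a non-Weierstrass value of $x$ occur in hyperelliptic-conjugate pairs, while those over a root of $f$ occur with even multiplicity. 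A short case analysis (using $|S|\le 2n+1$) then shows that the only effective divisors supported on the $P_i$ with all multiplicities equal to $1$ and linearly equivalent to a multiple of $P$ are the empty divisor (constant function, $|S|=0$) and $\sum_i P_i$ (function $y$, $|S|=2n+1$). Hence the kernel of $\mathbb F_2[R]\to\Pic^0$ is exactly $\{\emptyset,R\}$; since $|R|=2n+1$ is odd, $R$ is not an even subset, so the restriction $A(k^s)\to J[2](k^s)$ is injective, and as both groups have order $2^{2n}$ it is an isomorphism. (One could instead simply invoke the classical description of the $2$-torsion of a hyperelliptic Jacobian: it is generated by differences of Weierstrass points, with the single relation that all of them sum to zero.)

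The main obstacle is exactly this last step — pinning down the relations among the Weierstrass-point classes, equivalently that these classes already generate $J[2]$. It is the one place where genuine geometry (Riemann--Roch on $C$, or equivalently the explicit description of functions regular away from $P$) enters rather than bookkeeping; granting it, everything else — the identification of $A(k^s)$ with even subsets of $R$, the equivariance of the Weierstrass map, and the reduction from $\Gamma$-modules to \'etale group schemes — is formal.
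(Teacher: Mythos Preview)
Your proposal is correct and follows essentially the same route as the paper's proof: both send the affine Weierstrass points $P_i$ (indexed by the roots of $f$, equivalently by embeddings $L\hookrightarrow k^s$) to the $2$-torsion classes $[P_i-P]$ via $\mathrm{div}(x-\beta_i)=2P_i-2P$, identify the single relation among them via $\mathrm{div}(y)=\sum_i P_i-(2n+1)P$, and use the oddness of $[L:k]$ to match $\Res_{L/k}(\mu_2)_{N=1}$ with $\Res_{L/k}(\mu_2)/\mu_2$. The only differences are cosmetic: the paper first realizes $J[2]$ as the quotient $\Res_{L/k}(\mu_2)/\mu_2$ and invokes Riemann--Roch for generation, whereas you map directly from the norm-one subgroup and finish by counting to $2^{2n}$.
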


\begin{proof}
Write $L = k[x] / (f(x)) = k + k\beta + \cdots + k\beta^{2n}$, where $f(\beta) = 0$.
The other Weierstrass points $P_{\eta} = (\eta(\beta), 0)$ of $C(k^s)$ correspond bijectively to algebra embeddings $\eta: L \rightarrow k^s$. Associated to such a point we have the divisor $d_{\eta} = (P_{\eta}) - (P)$ of degree zero. The divisor class of $d_{\eta}$ lies in the $2$-torsion subgroup
$J[2](k^s)$ of the Jacobian, as
$$2d_{\eta} = {\rm div} (x - \eta(\beta)).$$
The Riemann-Roch theorem shows that the classes $d_{\eta}$ generate the finite group $J[2](k^s)$, and satisfy the single relation
$$\sum(d_{\eta}) = {\rm div}(y).$$
Since the Galois group of $k^s$ acts on these classes by permutation of the embeddings $\eta$, we have an isomorphism of group schemes: $J[2] \cong \Res_{L/k}(\mu_2)/\mu_2$. This quotient of $\Res_{L/k}(\mu_2)$ is isomorphic to the subgroup scheme $A = \Res_{L/k}(\mu_2)_{N=1}$, as the degree of $L$ over $k$ is odd. This completes the proof. 
\end{proof}

The exact sequence of Galois modules,
$$0 \rightarrow J[2](k^s) \rightarrow J(k^s) \rightarrow J(k^s) \rightarrow 0,$$
gives an exact descent sequence 
$$0 \rightarrow J(k)/2J(k) \rightarrow H^1(k, J[2]) \rightarrow H^1(k,J)[2] \rightarrow 0$$
in Galois cohomology.
By Lemma~\ref{lemm}, the middle term in this sequence can be identified with the group $H^1(k,A) = H^1(k, G_T)$, and our main result in this section is the following.

\begin{prop}
The subgroup $J(k)/2J(k)$ of  $H^1(k,A) = H^1(k,G_T)$ lies in the kernel of the map $\gamma: H^1(k,G_T) \rightarrow H^1(k,G)$.
\end{prop}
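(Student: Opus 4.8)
The plan is to recast the statement in terms of quadratic forms and then exploit the geometry of the pencil of quadrics attached to the distinguished orbit $T$. By the explicit description of $\gamma$ given above together with Lemma~\ref{lemm} (which identifies $A$ with $J[2]$), a class $c+2J(k)$, pushed forward along the descent injection $J(k)/2J(k)\hookrightarrow H^1(k,J[2])=H^1(k,A)$, lies in $\ker\gamma$ if and only if the orthogonal space $(L,\langle\,,\,\rangle_{\alpha})$ attached to the corresponding $\alpha\in(L^{*}/L^{*2})_{N=1}$ is split. Since that space already has dimension $2n+1$ and determinant $(-1)^{n}$, being split is equivalent to containing a totally isotropic $k$-subspace of dimension $n$. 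So the task is: given the rational point $c$ of $J$, exhibit such a ``Lagrangian'' in $(L,\langle\,,\,\rangle_{\alpha})$, where $\alpha$ is the classical $x-\beta$ descent image $\prod_{Q}(x(Q)-\beta)$ of (a divisor representing) $c$.

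First I would pass to the pencil of quadrics. Extending the data $(W=L,\langle\,,\,\rangle,T)$ to the $(2n+2)$-dimensional space $L\oplus k$ produces a pencil of quadrics whose discriminant cuts out the $2n+1$ zeros of $f$ on the projective line of the pencil; because $\deg f=\dim(L\oplus k)-1$, one member of the pencil is degenerate, and the double cover of this projective line branched over the zeros of $f$ and over that member is exactly the hyperelliptic curve $C$, with $P$ the branch point over the degenerate member. The base locus $X\subset\mathbb P(L\oplus k)$ is then a smooth complete intersection of two quadrics of dimension $2n-1$ (smoothness is where separability of $f$, i.e.\ the stability hypothesis $\Delta(T)\neq 0$, enters), and by the classical theory of pencils of quadrics the Fano variety $F$ of linear subspaces of $\mathbb P(L\oplus k)$ of projective dimension $n-1$ lying on $X$ is a torsor under the Jacobian $J$ of $C$, of dimension $n$. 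For the distinguished orbit $T$ this torsor is trivial: the plane $\mathbb P(M)$ with $M=k+k\beta+\cdots+k\beta^{n-1}$ visibly lies on both members of the pencil, so $F$ carries a canonical $k$-rational point and is canonically identified with $J$.

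Next I would use the rational point $c$. Regarding $c\in J(k)=F(k)$, translate $\mathbb P(M)$ by $c$ inside $F$ to obtain a new $k$-rational $(n-1)$-plane $\Lambda_{c}\subset X$. From $\Lambda_{c}$ the standard linear-algebra construction attached to an $(n-1)$-plane on the base locus of a pencil of quadrics (as extended to general fields) yields a self-adjoint operator $T_{c}$ on an orthogonal space $W_{c}$ with characteristic polynomial $f(x)$, \emph{together with} an $n$-dimensional totally isotropic $k$-subspace of $W_{c}$ read off directly from $\Lambda_{c}$; in particular $W_{c}$ is split. Tracing this construction through the cohomological dictionary identifies the class of $(W_{c},T_{c})$ in $H^{1}(k,A)=H^{1}(k,J[2])$ with the image of $c$ under $J(k)/2J(k)\hookrightarrow H^{1}(k,J[2])$, and hence, by the equivalence recalled in the first paragraph, $\alpha\in\ker\gamma$. (A more hands-on variant avoids $F$ entirely: by Riemann--Roch on the genus-$n$ curve $C$ write $c+n(P)=[D]$ with $D$ an effective $k$-rational divisor of degree $n$; in the generic case $D$ has a Mumford representation $(g,h)$, with $g$ monic of degree $n$, $\deg h<n$, $g\mid h^{2}-f$, one computes $\alpha\equiv(-1)^{n}g(\beta)$ in $L^{*}/L^{*2}$, and the isotropic $n$-plane can be written down explicitly using $g$, $h$, and $e=-(h^{2}-f)/g$.)

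The step I expect to be the main obstacle is this last one: making the classical correspondence between $(n-1)$-planes on $X$ and points of $J$, and the resulting identifications of torsor classes and orbit classes in $H^{1}$, precise over an arbitrary field of characteristic not $2$ rather than over an algebraically closed one, and in particular checking that the translated plane $\Lambda_{c}$ really reproduces the descent class of $c$ \emph{and} a split space. One must also deal with degenerate configurations: separability of $f$ keeps $X$ smooth and $F$ a genuine $J$-torsor, but the divisor $D$ representing $c+n(P)$, or the plane $\Lambda_{c}$, may meet the Weierstrass locus or the point $P$, or fail to be reduced, so the explicit constructions have to be extended to --- or shown to be independent of --- these choices.
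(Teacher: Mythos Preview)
Your setup matches the paper's---the pencil of quadrics on $L\oplus k$ and its Fano variety of common isotropic $n$-planes---but the crucial step diverges, and that is where the gap lies. The paper works with the pencil \emph{twisted by $\alpha$}: it takes $Q(\lambda,a)=\langle\lambda,\lambda\rangle_\alpha$ and $Q'(\lambda,a)=\langle\beta\lambda,\lambda\rangle_\alpha+a^2$ on $L\oplus k$, and invokes the result (cited to Donagi) that the Fano variety $F_\alpha$ is precisely the $J$-torsor realizing the image of $\alpha$ under $H^1(k,J[2])\to H^1(k,J)$. When $\alpha\in J(k)/2J(k)$ this torsor is trivial, so $F_\alpha(k)\neq\emptyset$; a $k$-point $Z$ is isotropic for $Q'$, hence misses $0\oplus k$ and projects isomorphically onto an isotropic $n$-plane in the \emph{$\alpha$-twisted} space $(L,\langle\,,\,\rangle_\alpha)$, which is therefore split.

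You instead keep the \emph{untwisted} pencil, identify $F_1\cong J$ via the base point $M$, translate by $c$ to get $\Lambda_c\in F_1(k)$, and then appeal to a ``standard linear-algebra construction'' producing a new pair $(W_c,T_c)$ with a visible isotropic $n$-plane. But every $k$-point of $F_1$, including $\Lambda_c$, is by definition an $n$-plane isotropic for the \emph{untwisted} quadrics; projecting it to $L$ witnesses only that the \emph{untwisted} form $\langle\,,\,\rangle$ is split, which we already knew. Translating inside a fixed Fano variety $F_1$ cannot produce an isotropic subspace for a different, $\alpha$-twisted form, and there is no standard construction that converts $\Lambda_c$ into a self-adjoint operator on a genuinely new orthogonal space. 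The twist by $\alpha$ has to enter somewhere; the paper inserts it at the level of the pencil itself, obtaining a distinct Fano variety $F_\alpha$ for each $\alpha$ rather than moving inside a single $F_1$. Your parenthetical Mumford-representation variant (write $c+n(P)=[D]$, take $\alpha\equiv(-1)^n g(\beta)$, build the isotropic $n$-plane from $g,h,e$) is a genuine alternative and can be completed---it is essentially explicit $2$-descent on hyperelliptic Jacobians---but, as you correctly flag, handling the non-generic divisors is real work rather than a formality.
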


\begin{proof}
We first make the descent map from $H^1(k,A)$ to $H^1(k,J)[2]$ more explicit. That is, we need to associate to a class $\alpha$ in the group
$$H^1(k,A) = (L^*/L^{*2})_{N=1}$$
a principal homogeneous space $F_{\alpha}$ of order $2$ for the Jacobian $J$ over $k$. The class $\alpha$ will be in the subgroup $J(k)/2J(k)$ precisely when the homogeneous space $F_{\alpha}$ has a $k$-rational point.

We have previously associated to the class $\alpha$ the orthogonal space $L$ with symmetric bilinear form $\langle \lambda,\mu \rangle_{\alpha} :=$ the coefficient of $\beta^{2n}$ in the product $\alpha\lambda\mu$. We also defined a self-adjoint operator given by multiplication by $\beta$ on $L$, and that gives a second symmetric bilinear form on $L$: $\langle \beta\lambda,\mu \rangle_{\alpha} = \langle \lambda,\beta\mu \rangle_{\alpha}$. 

Let $M = L \oplus k$, which has dimension $2n+2$ over $k$, and consider the two quadrics on $M$ given by 
\begin{eqnarray*}
Q(\lambda,a) &=& \langle \lambda,\lambda \rangle_{\alpha}\\
Q'(\lambda,a) &=& \langle \beta\lambda, \lambda \rangle_{\alpha} + a^2.
\end{eqnarray*}
The pencil $uQ - vQ'$ is non-degenerate and contains exactly $2n+2$ singular elements over $k^s$, namely, the quadric $Q$ at $v=0$ and the $2n+1$ quadrics $\eta(\beta) Q - Q'$ at the points where $f(\eta(\beta)) = 0$. Hence the base locus is non-singular in $\mathbb P(M)$ and the Fano variety $F_{\alpha}$ of this complete intersection, consisting of the $n$-dimensional subspaces $Z$ of $M$ which are isotropic for all of the quadrics in the pencil, is a principal homogeneous space of order $2$ for the Jacobian $J$ (c.f.\ \cite{D}). More precisely, there is a commutative algebraic group $I_{\alpha}$ with $2$ components over $k$, having identity component $J$ and non-identity component $F_\alpha$.

Since the discriminant of the quadric $uQ - vQ'$ in the pencil is equal to $v^{2n+2}f(x)$ with $x = u/v$, a point $c = (x,y)$ on the hyperelliptic curve $y^2 = f(x)$ determines both a quadric $Q_x = xQ - Q'$ in the pencil together with a {\it ruling} of $Q_x$, i.e., a component of the variety of $(n+1)$-dimensional $Q_x$-isotropic subspaces in $M$. Each point gives an involution of the corresponding Fano variety $\theta(c): F_{\alpha} \rightarrow F_{\alpha}$ with $2^{2n}$ fixed points over a separable closure $k^s$ of $k$. The involution $\theta(c)$ is defined as follows. A point of $F_{\alpha}$ consist of a common isotropic subspace $Z$ of dimension $n$ in $M\otimes k^s$. The point $c$ gives a maximal isotropic subspace $Y$ for the quadric $Q_x$ which contains $Z$. If we restrict any non-singular quadric in the pencil (other than $Q_x$)  to $Y$, we get a reducible quadric which is the sum of two hyperplanes: $Z$ and another common isotropic subspace $Z'$. This defines the involution: $\theta(c)(Z) = Z'$.  In the algebraic group $I_\alpha$, we have that $Z+Z'$ is the class of the divisor $(c)-(P)$ of degree zero in $J$.

Now assume that the class $\alpha$ is in the subgroup $J(k)/2J(k)$. Then its image in $H^1(k,J)$ is trivial, and the homogenous space $F_{\alpha}$ has a $k$-rational point. Hence there is a $k$-subspace $Z$ of $M = L \oplus k$ which is isotropic for both $Q$ and $Q'$. Since it is isotropic for $Q'$, the subspace $Z$ does not contain the line $0 \oplus k$, so its projection to the subspace $L$ has dimension $n$ and is isotropic for $Q$. This implies that the orthogonal space $L$ with bilinear form $(\lambda,\nu)_{\alpha}$ is split, so the class $\alpha$ is in the kernel of the map $\gamma: H^1(k,A) \rightarrow H^1(k, \SO(W))$.
\end{proof}

Note that when $c = P$, the Weierstrass point over $x = \infty$, the involution $\theta(P)$  is induced by the linear involution $(\lambda,a) \rightarrow (\lambda, -a)$ of $M = L \oplus k$. The fixed points are just the $n$-dimensional subspaces $X$ over $k^s$ which are isotropic for both quadrics 
\begin{eqnarray*}
q(\lambda) &=& \langle \lambda,\lambda \rangle_{\alpha}\,,\\
q'(\lambda) &=& \langle \beta\lambda, \lambda \rangle_{\alpha}
\end{eqnarray*}
on the space $L$ of dimension $2n+1$ over $k$. There are $2^{2n}$ such isotropic subspaces over $k^s$, and they form a principal homogeneous space for $J[2]$. The variety $F_{\alpha}$ has a $k$-rational point when $\alpha$ lies in the subgroup $J(k)/2J(k)$, but only has a $k$-rational point fixed by the involution $\theta(P)$ when $\alpha$ is the trivial class in $H^1(k,J[2])$.

\begin{remark}{\em 
The finite group scheme $A = J[2]$ does not determine the hyperelliptic curve $C$ over $k$. Indeed, for any class $d \in k^*/k^{*2}$, the hyperelliptic curve $C_d$ with affine equation $dy^2 = f(x)$ has the same $2$-torsion subgroup of its Jacobian. This Jacobian $J_d$ of $C_d$ acts on the Fano variety of the complete intersection of the two quadrics given by
\begin{eqnarray*}
Q(\lambda,a) &=& \langle \lambda,\lambda \rangle_{\alpha}\,,\\
Q'(\lambda,a) &=& \langle \beta\lambda, \lambda \rangle_{\alpha} + da^2.
\end{eqnarray*}
Indeed, the discriminant of the quadric $uQ - vQ'$ in the pencil is equal to $dv^{2n+2} f(x)$, where $x = u/v$. A similar argument then shows that the subgroup $J_d(k)/2J_d(k)$ is also contained in the kernel of the map $\gamma$ on $H^1(k,A)$. }
\end{remark}

\section{Arithmetic fields}

In this section, we describe the orbits in our three representations when $k$ is a finite, local, or global field.

\subsection{Finite fields}

First, we consider the case when $k$ is finite, of odd order $q$. In this case, $H^1(k,\SO(W)) = 1$ by Lang's theorem, as $\SO(W)$ is connected. As a consequence, every quadratic space of dimension $2n+1$ and determinant $(-1)^n$ is split, and all elements of $H^1(k,G_T)$ lie in the kernel of $\gamma$. 

In the standard representation $V = W$ the stabilizer of a vector $v$ with $q_2(v) \neq 0$ is the connected
orthogonal subgroup $\SO(U)$, which also has trivial first cohomology. So for every non-zero element $d$ in $k^*$, there is a unique orbit of vectors with $q_2(v) = d$.  (We have already seen this for general fields via Witt's extension theorem.)

In the adjoint representation $V = \frak{so}(W)$, the stabilizer of  a vector $T$ with $\Delta(T) \neq 0$ is the connected torus $\Res_{K/k} U_1(E/K)$, which also has trivial first cohomology. So for each separable characteristic polynomial of the form $f(x) = xg(x^2)$ there is a unique orbit of skew self-adjoint operators $T$ with characteristic polynomial $f(x)$.

In the representation $V = \Sym^2(W)$ the stabilizer of $T$ with characteristic polynomial $f(x)$ satisfying $\disc(f) = \Delta(T) \neq 0$ is the finite group scheme $A = (\Res_{L/k} \mu_2)_{N=1}$. In this case $H^1(k, A) = (L^*/L^{*2})_{N=1}$ is an elementary abelian $2$-group of order $2^m$, where $m+1$ is the number of irreducible factors of $f(x)$ in $k[x]$. So $2^m$ is the number of distinct orbits with characteristic polynomial $f(x)$. But this is also the order of the stabilizer $H^0(k,A) = A(k) = (L^*[2])_{N=1}$ of any point in the orbit. Hence the number of self-adjoint operators $T$ with any fixed separable polynomial is equal to the order of the finite group $\SO(W)(q)$. This is given by the formula
$$\#\SO(W)(q) = q^{n^2}(q^{2n} -1)(q^{2n-2} - 1)\cdots(q^2 - 1).$$

By Lang's theorem, we also have $H^1(k,J) = 0$, where $J$ is the Jacobian of the smooth hyperelliptic curve $y^2 = f(x)$ of genus $n$ over $k$.
Hence the homomorphism $J(k)/2J(k) \rightarrow H^1(k,A)$ is an isomorphism and every orbit with characteristic polynomial $f(x)$ comes from a $k$-rational point on the Jacobian.

\subsection{Non-archimedean local fields}

Next, we consider the case when $k$ is a non-archimedean local field, with ring of integers $O$ and finite residue field $O/\pi O$ of odd order. In this case, Kneser's theorem on the vanishing of $H^1$ for simply-connected groups (cf. \cite[Th.\ 6.4]{PR}, \cite{S}) gives an isomorphism
$$H^1(k,\SO(W)) \cong H^2(k, \mu_2) \cong (\mathbb Z/2\mathbb Z).$$

For the standard representation $V=W$, we also have $H^1(k,G_v) = H^1(k,\SO(U)) \cong  (\mathbb Z/2\mathbb Z)$, except in the case when $\dim(V) = 3$ and $q_2(v) = 1$, when $\SO(U)$ is a split torus and $H^1(k,\SO(U)) = 1$. The map $\gamma$ is a bijection except in the special case.

For the adjoint representation $V = \frak{so}(W)$, Kottwitz has shown in the local case that the map
$$\gamma: H^1(k,G_v) = (K^*/NE^*) \rightarrow H^1(k,G) =  (\mathbb Z/2\mathbb Z)$$
is actually a homomorphism of groups \cite{K1}. Let $f(x) = xg(x^2)$, so $K = k[x]/(g(x))$ and $E = k[x]/(g(x^2))$. It follows from local class field theory that the group $K^*/NE^*$ is elementary abelian of order $2^m$, where $m$ is the number of irreducible factors $g_i(x)$ of $g(x)$ such that $g_i(x^2)$ remains irreducible over $k$. Kottwitz also shows that that the map $\gamma$ is surjective when $m \geq 1$. Hence the number of orbits with separable characteristic polynomial $f(x)$ is $1$ when $m = 0$, and is $2^{m-1}$ when $ m \geq 1$.

For the third representation $V = \Sym^2(W)$, the map
$$\gamma: H^1(k,A) = H^1(k,J[2]) \rightarrow H^2(k, \mu_2) \cong (\mathbb Z/2\mathbb Z)$$ is an even quadratic form. The associated bilinear form is the cup product on $H^1(k,J[2])$ induced from the Weil pairing $J[2] \times J[2] \rightarrow \mu_2$, and  $J(k)/2J(k)$ is a maximal isotropic subspace on which $\gamma = 0$. This allows us to count the number of stable orbits with a fixed characteristic polynomial.

Let $m+1$ be the number of irreducible factors of $f(x)$ in $k[x]$, and let $O_L$ be the integral closure of the ring $O$ in $L$. Then $H^1(k, A) = (L^*/L^{*2})_{N=1}$ has order $2^{2m}$ and the number of stable orbits with characteristic polynomial $f(x)$ is equal $2^{m-1}(2^m + 1) = 2^{2m-1} + 2^{m-1}$. The subgroup $J(k)/2J(k)$ has order $2^m$, which is also the order of the subgroup $(O_L^*/O_L^{*2})_{N=1}$ of units. These two subgroups coincide when the polynomial $f(x)$ has coefficients in $O$ and the quotient algebra $O[x] / (f(x))$ is maximal in $L$.

\subsection{The local field $\mathbb R$}

We next consider the orbits in our representations when $ k = \mathbb R$ is the local field of real numbers. Then the pointed set $H^1(k,G) = H^1(k,\SO(W))$ has $n+1$ elements, corresponding to the quadratic spaces $W'$ of signature $(p,q)$ satisfying: $p+q = 2n+1$ and $q \equiv n$ ~(mod 2). The pointed set $H^1(k,G_v) = H^1(k,\SO(U))$ for the standard representation has $n+1$ elements when $q_2(v)$ has sign $(-1)^n$, and has $n$ elements when $q_2(v)$ has sign $-(-1)^n$. The map $\gamma$ is a bijection in the first case and an injection in the second case, when the definite quadratic space $W'$ does not have an orbit with $q_2(w^*) = q_2(v)$.

In the second and third representations, $H^1(k,G_T)$ is an elementary abelian $2$-group, and we will consider the situations where it has maximal rank. For the adjoint representation $V = \frak{so}(W)$, this occurs when all of the nonzero roots of the characteristic polynomial $f(x)$ of the skew self-adjoint transformation $T$ are purely imaginary. Thus $f(x) = xg(x^2)$ where $g(x)$ factors completely over the real numbers and all of its roots are strictly negative. In this case, the $2$-group $H^1(k,G_T) = K^*/NE^* = (\mathbb R^*)^n/N(\mathbb C^*)^n$ has rank $n$. The real orthogonal space $W$ decomposes into $n$ orthogonal $T$-stable planes and an orthogonal line on which $T = 0$. The signatures of these planes determine the real orbit of $T$. Writing $n = 2m$ or $n = 2m+1$, we see that there are $\binom{n}{m}$ elements in the kernel of $\gamma$. One can  show that $\gamma$ is surjective in this case, and calculate the order of each fiber as a binomial coefficient $\binom{n}{k}$.

For the symmetric square representation $V = \Sym^2(W)$, the $2$-group $H^1(k,G_T)$ has maximal rank when the characteristic polynomial $f(x)$ of the self-adjoint transformation $T$ factors completely over the real numbers. In this case, $H^1(k,G_T) = ((\mathbb R^*)^{2n+1}/(\mathbb R^{*2})^{2n+1})_{N=1}$ has rank $2n$. The real orthogonal space $W$ decomposes into $2n+1$ orthogonal eigenspaces for $T$, and the signatures of these lines determine the real orbit. Hence there are $\binom{2n+1}{n}$ elements in the kernel of $\gamma$. One can also show that $\gamma$ is surjective in this case, and calculate the order of each fiber as a binomial coefficient $\binom{2n+1}{k}$ with $k \equiv n$ ~(mod 2).

\subsection{Global fields}

Finally, we consider the representation $\Sym^2(V)$ when $k$ is a global field. In this case, the group $H^1(k,A) = H^1(k,J[2])$ is infinite. We will now prove that there are also infinitely many classes in the kernel of $\gamma$, so infinitely many orbits with characteristic polynomial $f(x)$. 
\begin{prop}
Every class $\alpha$ in the $2$-Selmer group $\Sel(J/k,2)$ of $H^1(k,J[2])$ lies in the kernel of $\gamma$, so corresponds to an orbit over $k$.
\end{prop}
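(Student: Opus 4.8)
The plan is to reduce the global statement to its local counterpart at every place of $k$ and then invoke the Hasse--Minkowski principle. The point is that membership in the $2$-Selmer group is defined precisely so that $\alpha$ becomes, at each completion $k_v$, the image of a $k_v$-point of the Jacobian, and the previous proposition already handles such classes --- but over $k_v$ rather than over $k$.

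First I would localize the previous proposition. Fix a place $v$ of $k$, write $L_v = L \otimes_k k_v$, and observe that the whole construction attached to a class in $H^1(k,A)$ --- the symmetric form $\langle\;,\;\rangle_\alpha$ on $L$, the pencil $uQ - vQ'$ on $M = L \oplus k$, and the Fano variety $F_\alpha$ --- is given by formulas that commute with the base change $k \rightsquigarrow k_v$. Hence the argument of the preceding proposition, run verbatim over $k_v$, shows that every class $\alpha_v$ in the subgroup $J(k_v)/2J(k_v)$ of $H^1(k_v,A)$ lies in the kernel of the local map $\gamma_v\colon H^1(k_v,A)\to H^1(k_v,\SO(W))$: a $k_v$-rational point of $F_{\alpha_v}$ produces a $k_v$-rational $n$-dimensional subspace of $M_v$ isotropic for $Q$ and $Q'$, whose projection to $L_v$ splits $(L_v,\langle\;,\;\rangle_{\alpha_v})$. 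Because the orthogonal construction commutes with base change, $\gamma_v$ is the composite of $\gamma$ with the restriction $H^1(k,\SO(W))\to H^1(k_v,\SO(W))$; that is, $\gamma(\alpha)$ restricts to $\gamma_v(\alpha_v)$ for every $v$.

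Now suppose $\alpha\in\Sel(J/k,2)\subseteq H^1(k,J[2])=H^1(k,A)$. By definition of the $2$-Selmer group, the restriction $\alpha_v$ lies in the image of $J(k_v)/2J(k_v)$ in $H^1(k_v,A)$ for every place $v$, so $\gamma(\alpha)$ restricts to $0$ in $H^1(k_v,\SO(W))$ for every $v$. Concretely, the quadratic space $W_\alpha = (L,\langle\;,\;\rangle_\alpha)$, of dimension $2n+1$ and determinant $(-1)^n$, becomes split over every completion of $k$. By the Hasse--Minkowski theorem --- equivalently, the injectivity of $H^1(k,\SO(W))\to\prod_v H^1(k_v,\SO(W))$ coming from the local--global principle for isometry of quadratic forms over a global field --- the space $W_\alpha$ is split over $k$. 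Therefore $\gamma(\alpha)=0$, i.e.\ $\alpha$ lies in the kernel of $\gamma$, and by Proposition~\ref{first} it corresponds to an $\SO(W)(k)$-orbit on $\Sym^2(W)$ with characteristic polynomial $f(x)$.

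The only delicate points are bookkeeping: verifying that the orthogonal space attached to $\alpha$ commutes with base change to $k_v$, so that vanishing of $\gamma(\alpha)$ can be tested place by place, and recalling that the local condition cutting out $\Sel(J/k,2)$ at $v$ is exactly membership in the image of $J(k_v)/2J(k_v)$, which is precisely the hypothesis under which the localized pencil-of-quadrics argument yields a rational isotropic subspace. At the archimedean places this condition already forces $W_\alpha\otimes k_v$ to be split, so no definiteness obstruction intervenes. Granting these checks, the proof needs nothing beyond the previous proposition together with Hasse--Minkowski.
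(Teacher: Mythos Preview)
Your argument is correct and follows essentially the same route as the paper: use the definition of the Selmer group to put $\alpha_v$ in $J(k_v)/2J(k_v)$ at every place, invoke the local form of the previous proposition to see that the associated quadratic space is split at every completion, and then apply Hasse--Minkowski to conclude it is split over $k$. The only difference is that you spell out the base-change compatibility of the construction more carefully than the paper does.
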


\begin{proof}
By definition, the elements of the $2$-Selmer group $\Sel(J/k,2)$ correspond to classes in $H^1(k,J[2])$ whose restriction to $H^1(k_v,J[2])$ is in the image of $J(k_v)/2J(k_v)$ for every completion $k_v$. Hence the orthogonal space $U_v$ associated to the class $\gamma(\alpha_v)$ in $H^1(k_v,\SO(V))$ is split at every completion $k_v$. By the theorem of Hasse and Minkowski, a non-degenerate orthogonal space $U$ of dimension $2n+1$ is split over $k$ if and only if $U_v = U \otimes k_v$ is split over every completion $k_v$. Hence the orthogonal space $U$ associated to $\gamma(\alpha)$ is split over $k$, and $\alpha$ lies in the kernel of $\gamma$.
\end{proof}

The same argument applies to the Selmer group of the Jacobian $J_d$ of the hyperelliptic curve $dy^2 = f(x)$, for any class $d \in k^*/k^{*2}$. Since the $2$-Selmer groups of the twisted curves are known to become arbitrarily large (cf. \cite{Bol} for the case of genus $n=1$),
the number of $k$-rational orbits is infinite.

\section{More general representations}

The three representations $V$ of $\SO(W)$ that we have studied illustrate various phenomena which occur in many other cases. For the standard representation, we have seen that the invariant polynomial $q_2$ distinguishes the orbits with $\Delta \neq 0$ over any field $k$. Here the arithmetic invariant theory is the same as the geometric invariant theory. 

This pleasant situation also occurs for orbits where the stabilizer $G_v$ is trivial! An interesting example for the odd orthogonal group $\SO(W)$ is the reducible representation $V = W \oplus \wedge^2(W)$. This occurs as the restriction of the adjoint representation of the split even orthogonal group of the space $W \oplus \langle -1 \rangle$. In this representation, the vector $v = (w,T)$ is stable if and only if the $2n+1$ vectors $\{w,T(w),T^2(w),\ldots,T^{2n}(w)\}$ form a basis of $W$, or equivalently, if the invariant polynomial $\Delta(v) = \det (\langle T^i(w), T^j(w)\rangle)$ is non-zero. In this case $G_v = 1$. 

One complication in this case is that the $k$-orbits do not cover the $k$-rational points of the categorical quotient: the map on points 
$$V(k) / \SO(W)(k) \rightarrow (V/\SO(W))(k)$$
is not surjective.  This situation is far more typical in invariant theory than the surjectivity for the three representations we studied.  Another atypical property of the three (faithful) representations we studied was that a generic vector had a nontrivial stabilizer.
For a generic $v$ in a typical faithful representation $V$ of a reductive group $G$, the stabilizer $G_v$ is trivial.   For $G$ a torus and $k$ complex, $G_v$ is always the kernel of the representation; meanwhile, for $G$ simple, there are only finitely many exceptions (see~\cite[p.\ 229--235]{PV}). 

The adjoint representations $V = \frak g$ of split reductive groups $G$ generalize the second representation $V = \wedge^2(W) = \frak{so}(W)$. Here the invariant polynomials correspond to the invariants for the Weyl group on a Cartan subalgebra, and generate a polynomial ring of dimension equal to the rank of~$G$. The orbits where the discriminant $\Delta$ is non-zero correspond to the regular semi-simple elements in $\frak g$, and the stabilizer $G_v$ of such an orbit is a maximal torus in $G$. As an example, one can take the adjoint representation $V = \Sym^2(W)$ of the adjoint form $\PGSp(W) = \PGSp_{2n}$ of the symplectic group, where the degrees of the invariants are $2,4,6,\ldots,2n$. \cite[Ch 8, \S 13.3, VI] {B}. For some applications to knot theory, see~\cite{Miller}.

The representations which occur in Vinberg's theory for torsion automorphisms $\theta$ generalize the third representation $V = \Sym^2(W)$. Here the invariants again form a polynomial ring. As an example, one can take the reducible representation $\wedge^2(W)$ of the group $\PGSp(W) = \PGSp_{2n}$, which corresponds to the pinned outer involution $\theta$ of $\PGL_{2n}$. When $\theta$ lifts a regular elliptic class in the Weyl group, the orbits where the discriminant $\Delta$ is non-zero are stable, and the stabilizer $G_v$ is a finite commutative group scheme over $k$. Several examples of this type were discussed in \cite{G} and \cite{BH}.

\section{Integral orbits}

In order to develop a truly complete arithmetic invariant theory, we should consider orbits in representations not just over a field, but over $\Z$ or a general ring.  The descent from an algebraically closed field to a general field that we have discussed in Sections~2--8 gives an indication of some of the issues that arise over more general rings, and it serves as a useful guide for the more general integral theory.  In particular, just as a single orbit over an algebraically closed field can split into several orbits over a subfield, an orbit over say the field $\Q$ of rational numbers may then split into several orbits over $\Z$.  

Often some of the most interesting arithmetic occurs in the passage from $\Q$ to $\Z$.
For example, consider the classical representation given by the action
of $\SL_2$ on binary quadratic forms $\Sym_2(2)$.  As we have already
noted, an orbit over $\Q$ (as over any field) is completely determined
by the value of the discriminant $d$ of the binary quadratic forms in
that orbit.  However, the set of primitive integral orbits inside the rational
orbit of discriminant $d\in \Z$ does not necessarily consist of one element, but rather
is in bijection with the set of (oriented) ideal classes of the quadratic order $\Z[(d+\sqrt{d})/2]$ in the quadratic field $\Q(\sqrt{d})$ (see, e.g.,~\cite{Buell}).

In general, to discuss integral orbits we must fix an
integral model of the representation being considered.  We give some
canonical integral models for the three representations we have
studied.  For the first representation, we take $W$ to be the odd
unimodular lattice of signature $(n+1,n)$ defined by (\ref{dprods}).
Because this lattice is self-dual, we can define the adjoint of an
endomorphism of $W$ over $\Z$.  The group $G$ is then the subgroup of
$\GL(W)$ consisting of those transformations $g$ such that $gg^\ast=1$
and $\det(g)=1$.  This defines a group that is smooth over $\Z[1/2]$
but is not smooth over $\Z_2$.  For the other representations of $G$,
we define $\wedge_2(W)$ as the lattice of skew self-adjoint
endomorphisms of $W$ equipped with the action of $G$ by conjugation;  we similarly define $\Sym_2(W)$ to be the
lattice of self-adjoint endomorphisms of~$W$. 
Our objective is to describe the orbits of $G$ on each of these three
$G$-modules, or at least those orbits where the discriminant invariant
is nonzero.  

\vspace{.05in}
For the standard representation $W$, we have already seen that there
is a unique orbit over $\Q$ for each value of the discriminant
$d\in\Q^\ast$. An invariant of a $\Z$-orbit of a vector $w$ in the
lattice $W$ with $\langle w,w \rangle = d$ is the isomorphism class
of the orthogonal complement $U = (\Z w)^{\perp}$, which is a lattice
of rank $2n$ and discriminant $d$ over $\Z$. Although $W$ is an odd lattice,
the lattice $U$ can be either even or odd. For example, when $n=3$, the orthogonal
complement $U$ of a primitive vector $w$ is an even bilinear space of rank
$2$ and discriminant $d$ (so corresponds to an integral binary quadratic form
of discriminant $d$) if and only
if the vector $w$
has the form $w = ae + bv + cf$ with $a$ and $c$ even and $b$ odd.
In this case $d = b^2 + 2ac \equiv 1$ modulo $8$, and the orbits of
$G(\Z)$ on such vectors form a principal homogeneous space for the
ideal class group of the quadratic order $\Z[(d + \sqrt{d})/2]$ of
discriminant $d$. We note that these are precisely the quadratic orders where the
prime $2$ is split. In this case the group $G(\Z)$ is 
isomorphic to the normalizer $N(\Gamma_0(2))$ of $\Gamma_0(2)$ in
$\PSL_2(\mathbb R)$, and the orbits described above correspond to 
the Heegner points of odd discriminant on the modular curve $X_0(2)^+$  \cite[\S1]{GKZ}.

\vspace{.05in}
We consider next the second representation $V=\wedge_2(W)$.
Here, we find that the integral orbits of $\SO(M)$ on the
self-adjoint transformations $T: M \to M$ with (separable)
characteristic polynomial $f(x)=xg(x^2)\in\Z[x]$ correspond to data which generalize
the notion of a ``minus ideal class'' for the ring $R=\Z[x]/(f(x))$.  
More precisely, the ring $R$ in $L=\Q[x]/(f(x))$ has an involution $\tau$ sending $\beta$ to $-\beta$, where $\beta$ denotes the image of $x$ in $R$. 
Let us consider pairs $(I, \alpha)$, where $I$ is a fractional ideal for $R$, the element $\alpha$ is in the $\Q$-subalgebra $F$ of $L$ fixed by $\tau$, the product $II^\tau$ is contained in the
principal ideal $(\alpha)$, and $N(I)N(I^\tau) = N(\alpha)$.
Such a pair $(I,\alpha)$ gives $I$  the structure of an integral lattice having
rank~$2n+1$ and determinant $(-1)^n$, where the symmetric bilinear
form on $I$ is defined by
\begin{equation}\label{wedge2prod}
\langle x,y \rangle := \mbox{coefficient of $\beta^{2n}$ in 
  $(-1)^n\alpha^{-1}xy^\tau$}.
  \end{equation}
    The pair $(I',\alpha')$ gives an isometric lattice if $I' =
cI$ and $\alpha' = cc^\tau \alpha$ for some element $c\in L^*$.
The operator $S: I \rightarrow I$ defined by $S(x) = \beta x$ is skew self-adjoint, and has characteristic polynomial $f(x)$. If the integral lattice determined by the pair $(I,\alpha)$ has signature $(n+1,n)$ over $\mathbb R$, there is an isometry $\theta: I \rightarrow M$ (cf. \cite{S2}), which is well-defined up to composition by an element in $\mathrm O(M)$. We obtain an $\SO(M)$-orbit of skew self-adjoint operators with characteristic polynomial $f(x)$ by taking $T = \theta S \theta^{-1}$. Conversely, since a
  skew self-adjoint $T: W \rightarrow W$ gives $W$ the structure of a
  torsion free $\Z[T] = R$-module of rank one, every integral orbit 
  arises in this manner.
Thus the 
equivalence classes of pairs $(I,\alpha)$ for the ring $R=\Z[x]/(f(x))$, as defined above,
index the finite number of integral orbits on $V=\wedge_2(W)$ with characteristic polynomial $f(x)$.



Let us now consider the third representation $V=\Sym_2(W)$.  
When $\dim (W) = 3$, the kernel of the trace map gives a lattice of rank $5$, closely
related to the space of binary quartic forms for $\PGL_2$. The
integral orbits in this case were studied in \cite{B1} and
\cite{W}. In general, the integral orbits of $\SO(M)$ on the
self-adjoint transformations $T: M \to M$ with (separable)
characteristic polynomial $f(x)$ correspond to data which generalize
the notion of an ideal class of order $2$ for the order $R = \mathbb
Z[x]/(f(x))$ in the $\Q$-algebra $L=\mathbb Q[x]/(f(x))$. 
More precisely,
we consider pairs $(I, \alpha)$, where $I$ is a fractional ideal for
$R$, the element $\alpha$ lies in $L^*$, the square $I^2$ of the ideal
$I$ is contained in the principal ideal $(\alpha)$, and the square of
the norm of $I$ satisfies $N(I)^2 = N(\alpha)$.  Then the lattice $I$ has the integral symmetric bilinear form
\begin{equation}\label{sym2prod}
\langle x,y \rangle := \mbox{coefficient of $\beta^{2n}$ in $\alpha^{-1}xy$}
\end{equation}
of determinant $(-1)^n$, and self-adjoint operator given by multiplication by $\beta$, where $\beta$ again denotes the image of $x$ in $R$. The pair $(I',\alpha')$ gives an isometric lattice if $I' = cI$ and $\alpha' = c^2 \alpha$ for some element $c \in L^*$.
When this lattice has signature $(n+1,n)$ over $\mathbb R$, it is isometric to $M$ and we obtain an integral orbit with characteristic polynomial $f(x)$.  Conversely, since a self-adjoint $T: W \rightarrow W$ gives~$W$ the structure of a  torsion free $\Z[T] = R$-module of rank one, every integral orbit arises in this way.
Thus 
pairs $(I,\alpha)$ for the ring $R=\Z[x]/(f(x))$,
up to the equivalence relation defined by $c$ in $L^*$,
index the finite number of integral orbits on $V=\Sym_2(W)$ with characteristic polynomial~$f(x)$.



We summarize what we have established for the representations $V=\wedge_2(W)$ and $V=\Sym_2(W)$.

\begin{proposition}\label{int2}
  Let $V$ denote either the representation $\wedge_2(W)$ or $\Sym_2(W)$ of $G$.  Let $f(x)$ be a polynomial of degree $2n+1$ with coefficients in
  $\Z$ and non-zero discriminant in $\Q$;  if $V=\wedge_2(W)$ we further assume that $f(x)=xg(x^2)$ for an integral polynomial $g$.  Then the integral orbits of
  $G(\Z)$ on $V(\Z)$
  with characteristic polynomial
  $f(x)$ are in bijection with the equivalence classes of pairs $(I, \alpha)$ for the order $R = \Z[x]/(f(x))$ defined above,
with the property that the
  bilinear form $\langle \;,\,\rangle$ on $I$ $($given by $(\ref{wedge2prod})$ or $(\ref{sym2prod})$, respectively) is split. 
 \end{proposition}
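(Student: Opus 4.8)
The plan is to assemble the constructions preceding the statement into a pair of mutually inverse maps between integral orbits and equivalence classes of pairs $(I,\alpha)$, and then to determine which pairs occur. Throughout, ``resp.'' refers to the two cases $V=\wedge_2(W)$ and $V=\Sym_2(W)$, and $M$ denotes the fixed lattice $W$ of (\ref{dprods}).

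\emph{From orbits to pairs.} Given a skew self-adjoint, resp.\ self-adjoint, operator $T$ on $M$ with characteristic polynomial $f(x)$, the hypothesis $\disc f\neq 0$ forces $f$ to be the minimal polynomial of $T$, so that $M$ becomes a module over $\Z[T]\cong R=\Z[x]/(f(x))$. Since $M\otimes\Q=L$ is a free $L$-module of rank one and $M$ is torsion free, $M$ is isomorphic as an $R$-module to a fractional ideal $I$ of $R$. Fixing such an isomorphism $\phi\colon M\to I$, under which $T$ becomes multiplication by $\beta$, I would transport the bilinear form of $M$ to a non-degenerate symmetric form on $I$ for which multiplication by $\beta$ is skew self-adjoint, resp.\ self-adjoint. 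The crucial point is that any such form has the shape (\ref{wedge2prod}), resp.\ (\ref{sym2prod}), for a unique $\alpha\in F^*$, resp.\ $\alpha\in L^*$. This rests on the classical identity identifying ``coefficient of $\beta^{2n}$ in $z$'' with $\Trace_{L/\Q}(z/f'(\beta))$, which shows that every non-degenerate symmetric $\Q$-bilinear form on $L$ making multiplication by $\beta$ self-adjoint equals $(x,y)\mapsto\Trace_{L/\Q}(\gamma xy)$ for a unique $\gamma\in L^*$, and the analogue for $\tau$-sesquilinear forms in the skew case, whence $\alpha=\gamma^{-1}f'(\beta)^{-1}$. Using this trace description together with the self-duality $R^\vee=f'(\beta)^{-1}R$ of the monogenic order $R$ under the trace pairing, integrality of the form on $I$ translates exactly into $II^\tau\subseteq(\alpha)$, resp.\ $I^2\subseteq(\alpha)$, while unimodularity of the resulting determinant-$(-1)^n$ lattice becomes $N(I)N(I^\tau)=N(\alpha)$, resp.\ $N(I)^2=N(\alpha)$. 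Finally, replacing $\phi$ by its composite with multiplication by $c\in L^*$ replaces $(I,\alpha)$ by $(cI,cc^\tau\alpha)$, resp.\ $(cI,c^2\alpha)$---the defining equivalence---and since $\mathrm{O}(M)=\SO(M)\times\{\pm\mathrm{id}_M\}$ with $-\mathrm{id}_M$ central, conjugacy of $T$ under $\mathrm{O}(M)$ is the same as under $\SO(M)$. This produces a well-defined map from $\SO(M)$-orbits on $V(\Z)$ with characteristic polynomial $f$ to equivalence classes of pairs $(I,\alpha)$.

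\emph{From pairs to orbits, and the split condition.} Conversely, a pair $(I,\alpha)$ equips $I$, via (\ref{wedge2prod}), resp.\ (\ref{sym2prod}), with an integral unimodular symmetric bilinear form of determinant $(-1)^n$ for which multiplication by $\beta$ is skew self-adjoint, resp.\ self-adjoint. If this form is split---i.e.\ $I\otimes\Q$ has Witt index $n$, the maximum for rank $2n+1$---then $I\otimes\R$ has Witt index at least $n$, which forces its signature to be $(n+1,n)$, the determinant being $(-1)^n$; since an indefinite odd unimodular $\Z$-lattice is determined up to isometry by its signature, $I$ is then isometric to $M$, as already recorded in the discussion preceding the statement. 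Choosing such an isometry $\theta\colon I\to M$ and setting $T=\theta\,(\beta\,\cdot)\,\theta^{-1}$ yields an operator in $V(\Z)$ with characteristic polynomial $f$, well-defined up to $\SO(M)$-conjugacy. I would then verify that the two constructions are mutually inverse and match the two equivalence relations; finiteness of the set of equivalence classes of pairs follows from the finiteness of $\Pic(R)$ together with Dirichlet's unit theorem. The result is the asserted bijection between integral orbits with characteristic polynomial $f$ and equivalence classes of split pairs $(I,\alpha)$.

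The step I expect to be the main obstacle is the one just used: identifying the $R$-compatible non-degenerate symmetric forms on a rank-one $R$-lattice with the explicit forms $\langle\;,\,\rangle_\alpha$, and translating integrality and unimodularity into the divisibility and norm conditions defining the pairs $(I,\alpha)$. This is where the residue/trace description of the pairing and the computation of $R^\vee$ do the real work; the remaining points---well-definedness of the equivalences, mutual inverseness, and the split $\Leftrightarrow$ isometric-to-$M$ dichotomy---are routine.
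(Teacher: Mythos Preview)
Your proposal is correct and follows the same route as the paper: the proposition is stated there as a summary of the preceding discussion, which constructs the two maps (pair $\to$ orbit via an isometry $\theta:I\to M$ once the lattice has signature $(n+1,n)$, citing \cite{S2}; orbit $\to$ pair by viewing $M$ as a rank-one torsion-free $R=\Z[T]$-module) exactly as you do. You supply more detail than the paper on the ``orbit $\to$ pair'' direction---in particular, the identification of every $\beta$-compatible nondegenerate form with some $\langle\;,\,\rangle_\alpha$ via the trace/residue description and the computation $R^\vee=f'(\beta)^{-1}R$, and the translation of integrality and unimodularity into the ideal and norm conditions---whereas the paper simply asserts that every integral orbit arises from a pair. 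One small point you use implicitly: the unimodular lattice $I$ is automatically of odd type because its rank $2n+1$ is odd (a nondegenerate alternating form over $\mathbb F_2$ would force even rank), which is what lets you invoke the classification of indefinite odd unimodular lattices.
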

 
  In terms of Proposition~$\ref{rationalws}$, the integral orbit corresponding to the pair $(I,\alpha)$ maps to the rational  orbit of $\SO(W)(\Q)$ on $V(\Q)$ corresponding to the class of $\alpha \equiv\alpha^{-1}$. Here we view $\alpha$ as an element of 
  $(K^*/NE^*)$ when  $V=\wedge_2(W)$, so $L = E + \Q$ and $L^{\tau} =  K + \Q$. When $V = \Sym_2(W)$, we view $\alpha$ as an element of  $(L^*/L^{*2})_{N \equiv1}$.

  Finally, we remark that it would be interesting and useful to
  develop a theory of cohomology that allows one to describe
  orbits over the integers as we have in the cases above.
  For example, let us consider again the representation $V$ of the
  group $G = \PGL_2$ over $ \mathbb Q$ given by conjugation on the
  $2\times 2$ matrices $v$ of trace zero. Then this is the adjoint
  representation, and is also the standard representation of $\SO_3
  \cong \PGL_2$. The ring of invariant polynomials on $V$ is generated
  by $q(v):= -\det(v)$, and the stabilizer $G_v$ of a vector with
  $q(v) = d \neq 0$ is isomorphic to the one-dimensional torus over
  $\mathbb Q$ which is split by $K = \mathbb Q(\sqrt d)$, and all
  vectors $w$ with $q(w) = q(v) \neq 0$ lie in the same $G(\mathbb
  Q)$-orbit.

  A natural integral model of this representation is given by the action of the
  $\mathbb Z$-group $G = \PGL_2$ on the finite free $\mathbb Z$-module
  of binary quadratic forms $ax^2 + bxy + cy^2$. This is equivalent to
  the representation by conjugation on the matrices of trace zero in
  the subring $\mathbb Z + 2\mathcal R$ of the ring $\mathcal R$ of
  $2\times2$~integral matrices. In this model, the invariant
  polynomial is just the discriminant $ d = b^2 - 4ac$ of the binary
  form. The content $e = \gcd(a,b,c)$ is also an invariant of a
  non-zero integral orbit.

  We may calculate the $G(\mathbb Z)$-orbits on the set $S$ of forms
  with discriminant $d \in \mathbb Z - \{0\}$ and content $e = 1$ (so
  the binary quadratic form is primitive) via cohomology.  Let $O =
  O(d)$ be the quadratic order of discriminant $d$. Then the
  stabilizer $G_v$ of such an orbit in $\PGL_2$ is a smooth group
  scheme over $\mathbb Z$ which lies in an exact sequence (in the
  \'etale topology)
$$1 \rightarrow \mathbb G_m \rightarrow  \Res_{O/\mathbb Z} \mathbb G_m \rightarrow G_v \rightarrow 1.$$
Furthermore, the $\mathbb Z$-points of the quotient scheme $G/G_v$ can
be identified with the set $S$. Hence the orbits in question are in
bijection with the kernel of the map $\gamma: H^1(\mathbb Z, G_v)
\rightarrow H^1(\mathbb Z, \PGL_2)$ in \'etale cohomology. Since
$H^1(\mathbb Z, \PGL_2) = 1$, the orbits are in bijection with the
elements of $H^1(\mathbb Z,G_v)$.  Since $H^1(\mathbb Z, \mathbb G_m)
= H^2(\mathbb Z, \mathbb G_m) = 1$ the long exact sequence in
cohomology gives
$$H^1(\mathbb Z,G_v) = H^1(\mathbb Z, \Res_{O/\mathbb Z} \mathbb G_m) = \Pic(O).$$
Hence the orbits of $\PGL_2(\mathbb Z)$ on the set $S$ of binary
quadratic forms of discriminant $d \neq 0$ and content~$1$ form a
principal homogeneous space for the finite group $\Pic(O(d))$ of
isomorphism classes of projective $O(d)$-modules of rank one. Thus the
number of primitive integral orbits contained in the rational orbit of
discriminant $d$ is given by the class number of $O(d)$.


\def\noopsort#1{}
\providecommand{\bysame}{\leavevmode\hbox to3em{\hrulefill}\thinspace}


\begin{thebibliography}{10}

\bibitem{BH} M.\ Bhargava and W.\ Ho, Coregular spaces and genus one
  curves, preprint.


\bibitem{B1}
M.\ Bhargava and A.\ Shankar,
{Binary quartic forms having bounded invariants, and the boundedness of the average rank of elliptic curves},
ArXiv: 1006.1002 (2010).

\bibitem{B}
N.\ Bourbaki, \emph{Groupes et alg\`ebres de Lie}, Hermann, 1982.

\bibitem{BSD}
 B.\ J.\ Birch and H.\ P.\ F.\ Swinnerton-Dyer, Notes on elliptic curves I,  
{\it J. Reine Angew. Math.} {\bf 212} (1963), 7--25.

\bibitem{Bol}
R.\ Bolling, Die Ordnung der Schafarewitsch-Tate-Gruppe kann beliebig gross  werden,
{\it Math. Nachr.} {\bf 67} (1975), 157--179.

\bibitem{Buell}
D.\ A.\ Buell, {\it Binary quadratic forms: classical theory and modern computations}, Springer-Verlag, 1989.

\bibitem{D}
R.~Donagi,
{Group law on the intersection of two quadrics},
{\it Annali della Scuola Normale Superiore di Pisa} {\bf 7} (1980), 217--239.


\bibitem{G}
B.~Gross, {On Bhargava's representations and Vinberg's invariant
  theory}, In: \emph{Frontiers of Mathematical Sciences}, International Press (2011), 317--321.


\bibitem{GKZ}
B.\ Gross, W.\ Kohnen, and D.\ Zagier, Heegner points and derivatives of $L$-series II, {\it Math.\ Ann.}~{\bf 278} (1987), 497--562.


\bibitem{K1} R.~Kottwitz, 
{Stable trace formula: cuspidal tempered terms},
{\it Duke Math.\ J.} \textbf{51} (1984), 611--650.

\bibitem{K}
A.\ Knus, A.\ Merkurjev, M.\ Rost, and J.-P.\ Tignol,
\emph{The book of involutions},
AMS Colloquium Publications \textbf{44}, 1998.

\bibitem{KT} A.\ Kostrikin and P.\ H.\ Tiep, {\it Orthogonal
    decompositions and integral lattices}, deGruyter Expositions in
   Mathematics {\bf 15}, Berlin, 1994.

\bibitem{L} R.\ Langlands,  {Stable conjugacy---definitions and lemmas},
{\it Canadian J.\ Math} \textbf{31} (1979), 700--725.

\bibitem{L2} R.\ Langlands, {Les d\'ebuts d'une formule des traces stable},
{\it Publ.\ Math.\ de L'Univ.\ Paris VII}, \textbf{13}, 1983.


\bibitem{Miller}
A.\ Miller, Knots and arithmetic invariant theory, preprint.

\bibitem{M}
J.\ Milnor and D.\ Husemoller,
\emph{Symmetric bilinear forms},
Springer Ergebnisse \textbf{73}, 1970.


\bibitem{Mu}
D.\ Mumford, J.\ Fogarty, F.\ Kirwan
{\it Geometric invariant theory},
Springer Ergebnisse {\bf 34}, 1994.


\bibitem{P}
D.~Panyushev,
{On invariant theory of $\theta$-groups,}
{\it J.\ Algebra} \textbf{283} (2005), 655--670.

\bibitem{PR}
V.\ Platonov and A.\ Rapinchuk, {\it Algebraic groups and
  number theory}, Translated from the 1991 Russian original by Rachel
  Rowen, {\it Pure and Applied Mathematics} {\bf 139}, Academic Press, Inc.,
  Boston, MA, 1994.

\bibitem{PV}
V.\ L.\ Popov and E.\ B.\ Vinberg, {\it Invariant Theory}, in {\it Algebraic Geometry IV}, Encylopaedia of Mathematical Sciences {\bf 55}, Springer-Verlag, 1994.

\bibitem{S}
J-P.~Serre,
\emph{Galois cohomology}, Springer Monographs in Mathematics, 2002.

\bibitem{S2}
J-P.~Serre,
\emph{A course in arithmetic},
Springer GTM \textbf{7}, (1978).

\bibitem{Sh}
D.~Shelstad
{Orbital integrals and a family of groups attached to a real reductive group},
{\it Ann. Sci. \'Ecole Norm. Sup.} {\bf 12} (1979), 1--31.

\bibitem{Stoll}
M.~Stoll, {Implementing $2$-descent for Jacobians of hyperelliptic curves},
{\it Acta Arith} {\bf 98} (2001), 245--277.

\bibitem{W} M.~Wood, {\it Moduli spaces for rings and ideals}, Ph.D.\
  thesis, Princeton University, 2008.

\end{thebibliography}
\end{document}